 \theoremstyle{plain}
 \newtheorem{theorem}{Theorem}[section]
 \newtheorem{lemma}{Lemma}[section]
 \newtheorem{proposition}{Proposition}[section]
 \newtheorem{corollary}{Corollary}[section]
\theoremstyle{definition}
 \newtheorem{definition}{Definition}[section]
 \newtheorem{example}{Example}[section]
 \newtheorem{remark}{Remark}
\theoremstyle{remark}
\journal{}
\begin{document}

\begin{frontmatter}
 \title{Laws of Large Numbers for Non-Independent Random Variables on Hyperspaces with respect to the Hausdorff Metric}

\author[label1]{Jinping Zhang}
\address[label1]{School of Mathematics and Physics, North China Electric Power University,
	Beijing, 102206, P.R.China}
\ead{zhangjinping@ncepu.edu.cn}
\author[label2]{Li Guan\tnoteref{label3}}
\tnotetext[label3]{Corresponding author:Li Guan}
\address[label2]{School of Mathematics, Statistics and Mechanics, Beijing University of Technology, 100 Pingleyuan, Chaoyang District, Beijing, 100124,
P.R.China}
\ead{guanli@bjut.edu.cn}

\begin{abstract}
	This paper investigates the limit behavior of the Minkowski sums for sequences of  set-valued random variables. When the underlying space is finite dimensional, by using the support function, we establish the weak and strong laws of large numbers for non-independent random variables in the hyperspace with respect to the Hausdorff metric $d_H$.

\end{abstract}

\begin{keyword}
  Set-valued random variable \sep Hyperspace \sep  Uncorrelated \sep Law of large numbers 
\MSC 60F15\sep  60F10\sep  65C30
\end{keyword}

\end{frontmatter}


\section{Introduction}
\label{author_sec:1}
In the real world, since the uncertainty includes not only randomness but also imprecision, it has limitation to describe an event by single-valued random variables. The set-valued random variable is a suitable tool to characterize both randomness and imprecision. For example, the price of
a stock within one trading day may change a lot. The single-valued opening price or closing pricing is not enough to describe the uncertainty of market. It is more reasonable to consider the variation range of stock price, which can be described by a random interval, the special case of set-valued random variable. In the past few decades, the theory of set-valued random variables with a wide range of applications
has received a lot of attention. See for example \cite{Au65,CasVa,Frank,HiUm,LiOgV,Mol} and references therein. Especially, for the applications of set-valued theory to econometrics and finance, we would like to recommend the nice article \cite{Mol2014} and references therein.

It is well known that the limit theory plays an important role in classical probability theory, statistical inference and parameter estimation. The law of large numbers is an important limit theorem, which has been extensively studied and extended to set-valued cases, Choquet integrals, nonlinear expectations, and Sugeno integrals (\cite{Teran25}). For set-valued case, the first strong law of large numbers (in short by SLLNs) was given by Artstein
and Vitale in 1975 \cite{AV}, where the set-valued random variables are independent identically distributed and take values in the family of compact subsets of Euclidean space $\mathbb R^{d}$. After that, many other authors such as Gin$\acute{e}$,
Hahn and Zinn \cite{GHZ}, Hess \cite{He},  Hiai \cite{Hiai} studied  SLLNs under different
settings for convex set-valued random variables where the underlying
space is a separable Banach space.  By using The R\aa{}dstrom embedding theorem together with the SLLN in Banach spaces, Puri and Ralescu in 1983 \cite{PuRa83} concisely proved a SLLN for the sequence of independent and identically distributed compact convex set-valued random variables.  Artstein and Hansen \cite{AV1985}, Hiai \cite{Hiai} obtained the SLLNs for independent set-valued random variables in Banach space without convexity assumption by using different methods. Taylor and Inoue proved SLLNs
for independent (not necessary to be identically distributed)
case in Banach space in \cite{TaIna}. Ter$\acute{a}$n and Molchanov \cite{Teran06} studied the law of large numbers in a metric
space with a convex combination operation and applied the method in set-valued setting. Guan et al. \cite{Guan07} studied the SLLNs for weighted sums of set-valued random variables in Rademacher type p Banach space.  As the extension of set-valued case, Li and Ogura \cite{LiOg06} studied the SLLNs for independent (not necessary identically distributed) fuzzy set-valued variables in the sense of extended Hausdorff distance. Guan and Li \cite{Guan04}, Guan et al \cite{Guan08} obtained the strong law of large numbers for weighted sum of fuzzy set-valued variables.

The assumption of independence of random variable sequences is a little too strong for some cases. Considering weaker assumption, some researchers studied limit theorems for single-valued random variable sequences. There are dozens of papers studying weak and strong laws of large numbers for single-valued random variables which are not independent. For example, Taylor 1978 in \cite{Taylor78} defined uncorrelation of random variables and obtained laws of large numbers. Jan 2021 \cite{Jan} studied the Kolmogorov strong law of large numbers for uncorrelated real-valued random variables. Laws of large numbers also were studied under other weaker assumptions such as positive dependence, negative dependence \cite{Boz,Kuc,Taylor01}, dependence \cite{Pat}. Ko \cite{Ko} obtained the strong law of large numbers for linear multi-parameter stochastic processes generated by identically distributed and negatively associated random fields.

But for set-valued case, to our knowledge, for the law of large numbers, there is no result other than the condition of independence. It is also necessary and possible to study the limit behavior of set-valued random variables which are non-independent. Compared with the existing literature, the innovation of this paper is that the law of large numbers holds without the independence assumption of the sequence of random variables. Firstly, by using the support function, we define the uncorrelated set-valued random variables based on the notion of single-valued case in \cite{Taylor78}. Uncorrelation is weaker than independence.   Secondly, under the assumptions of uncorrelation and compactly uniform integrability, we shall prove the weak and strong laws of large numbers for the sequence of set-valued random variable in the sense of the Hausdorff metric $d_H$.

This paper is organized as follows. Section \ref{author_sec:2} is about some definitions and basic results of set-valued random variables. Section \ref{author_sec:3} contributes to the weak law of large numbers and examples.  Section \ref{author_sec:4} is on the strong law of large numbers. Section \ref{author_sec:5} gives the concluding remark.

\section{Preliminaries}
\label{author_sec:2}
Throughout this paper, we assume that $(\Omega,{\mathcal{A}},\mu)$ is
a complete and non-atomic probability space. $(\frak X , \|\cdot\|)$ is a real
separable Banach space. Its dual space is denoted by $(\frak X^* , \|\cdot\|_{\frak X^*})$ . Let ${\mathcal P}(\frak X)$ (resp. ${\mathcal P}_k(\frak X)$) denote the family of all nonempty
closed (resp.compact) subsets of $\frak X$. And ${\mathcal P}_{kc}(\frak X)$ is the family of all
nonempty compact convex subsets of $\frak X$. Let $\mathbb R$ denote the family of all real numbers and $\mathbb N$ the set of all natural numbers.

Let $A$ and $B$ be two nonempty subsets of $\frak X$ and let
$\lambda\in\mathbb R$ be the set of all real numbers. The Minkowski sum
and scalar multiplication are defined by
$$A+B=\{a+b:a\in A, b\in B\}$$
$$\lambda A=\{\lambda a:a\in A\}$$

The Hausdorff metric on ${\mathcal P}(\frak X)$ is defined by
$$d_{H}(A,B)=\max\{\sup\limits_{a\in A}\inf\limits_{b\in B}\|a-b\|,\
\sup\limits_{b\in B}\inf\limits_{a\in A}\|a-b\|\}$$ for $A,\ B\in
{\mathcal P}(\frak X)$. For an $A$ in ${\mathcal P}(\frak X)$,  let $\|A\|_{{\mathcal P}}=d_{H}(\{0\},A)=\sup_{a\in A}\|a\|$.

The metric space $({\mathcal P}_k({\frak X}),d_H)$ is complete and separable. And
${\mathcal P}_{kc}({\frak X})$ is a closed subset of $( {\mathcal P}_k({\frak X}),d_H)$ (cf.
\cite{LiOgV}, Theorems 1.1.2 and 1.1.3).

For each $A\in {\mathcal P}({\frak X})$, the support function is defined by
$$\sigma(x^*,A)=\sup\limits_{a\in A}<x^*,a>,\ \ x^*\in {\frak X}^*.$$

Let $S^*$ be the unit
sphere in ${\frak X}^*$, and $C(S^*)$ be the set of all continuous
functions $f$ on $S^*$ with respect to the norm
$\|f\|_C=\sup_{x\in S^*}|f(x)|$. The mapping
$j_0:A\rightarrow \sigma(\cdot,A)$ can embed the space
$({\mathcal P}_{kc}({\frak X}),d_H)$ into a closed convex cone in $C(S^*)$
isometrically and isomorphically (cf. \cite{LiOgV}, Theorem 1.1.12
). By using the support function, we have the following equivalent definition of Hausdorff metric.
For $A,B\in {\mathcal P}_{kc}(\frak X)$, the Hausdorff metric between $A$ and $B$ is
\begin{equation}\label{metric}
	d_{H}(A,B)=\sup\{|\sigma(x^{*},A)-\sigma(x^{*},B)|:x^{*}\in S^{*}\}.
\end{equation}
Therefore, $\|A\|_{{\mathcal P}}=d_{H}(\{0\},A)=\sup_{a\in A}\|a\|=\sup_{x^{*}\in S^{*}}| \sigma(x^{*},A)|$.

A set-valued mapping $F:\Omega\rightarrow {\mathcal P}({\frak X})$ is
called {\em a set-valued random variable (or a random set, or a
	multifunction)} if, for each open subset $O$ of ${\frak X}$,
$F^{-1}(O)=\{\omega \in \Omega: F(\omega)\cap O \neq \emptyset \}\in
\mathcal A$. The family of all ${\mathcal P}({\frak X})$-valued random variables is denoted by $\mathcal M(\Omega; {\mathcal P}(\frak X))$.

A set-valued random variable $F$ is called {\em $L^{p}$-integrably bounded}($p
\geq 1$)
(cf. \cite{HiUm} or \cite{LiOgV}) if
$\int_{\Omega}\|F(\omega)\|_{\mathcal P}^{p}d\mu <\infty$.

Let $L^p(\Omega,\mathcal{A},\mu;{\mathcal P}_k(\frak X))$ (resp. $L^p(\Omega,\mathcal{A},\mu;{\mathcal P}_{kc}(\frak X))$) denote the space of all integrably bounded compact (resp. compact and convex) random variables, which is briefly denoted by $L^p(\Omega;{\mathcal P}_k(\frak X))$ (resp. $L^p(\Omega;P_{kc}(\frak X))$).
For $F, G\in
L^1(\Omega,\mathcal{A},\mu;{\mathcal P}_k(\frak X))$, $F=G$ if and only if
$F(\omega) =G(\omega)~ a.s.$  Regarding the
concepts and results of set-valued random variables, readers may
refer to nice books \cite{CasVa, LiOgV, Mol}.

For each set-valued random variable $F$, the expectation of $F$ is the Aumman integral,
denoted by $E[F]$, 
$$E[F]:=\{E[f]:f\in S_{F}\},$$
where $E[f]=\int_{\Omega}fd\mu$ is the usual Bochner integral in
$L^1[\Omega;\frak X]$ (the family of integrable $\frak X$-valued random
variables), and $S_F$ is the family of all integrable selections of $F$. $S_F:=\{f\in L^1(\Omega;\frak X) :f(\omega)\in F(\omega) \
a.s.\}$.

About support function, we list here some results that will be needed in later proofs. We refer the reader to page 421 in \cite{Mol}, page 7 and Theorem 2.1.12 in \cite{LiOgV}).
\begin{proposition}\label{prop}
	Take $A,B\in \mathcal P_{kc} (\frak X)$, $x_{1}^{*}, x^{*}_{2}\in\frak X^* $, $\lambda\geq 0$, then we have
	\begin{enumerate}[label=\upshape(\arabic*),align=left]
		\item  $\sigma(x^*,A+B)=\sigma(x^*,A)+\sigma(x^*,B)$;
		\item  $\sigma(x^*, \lambda A)=\lambda \sigma(x^*,A)$;
		\item  $\sigma(x_{1}^*+x_{2}^*, A)\leq \sigma(x_{1}^*,A)+ \sigma(x_{2}^*,A) $;
		\item $ |\sigma(x_{1}^*,A)- \sigma(x_{2}^*,A)|\leq \|x^*_1-x^*_2\|_{\frak X^*}\|A\|_{\mathcal P}$.
	\end{enumerate}
\end{proposition}
The formula $(4)$ in Proposition \eqref{prop} implies  that $\sigma(\cdot, A)$ is Lipschitz continuous with respect to $x^*\in\frak X^*$.

\begin{lemma}\label{lem:change}
	Let $F$ be a ${\mathcal P}(\frak X)$-valued random variable and $S_F\neq\emptyset$. Then for any $x^*\in{\frak X}^*$, we have
	$$
	\sigma(x^*, E[F])=E[\sigma(x^*, F)].
	$$
\end{lemma}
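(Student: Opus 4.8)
The plan is to prove the two inequalities $s(x^*,E(F))\le E(s(x^*,F))$ and $s(x^*,E(F))\ge E(s(x^*,F))$ separately. The first is elementary, while the second rests on a measurable selection argument that is the real content of the statement. Throughout I would first note that $\omega\mapsto s(x^*,F(\omega))$ is measurable (being a supremum of the measurable functions $\langle x^*,f_n(\omega)\rangle$ over a countable dense family of selections) and, when $F$ is integrably bounded, integrable, since $|s(x^*,F(\omega))|\le \|x^*\|\,\|F(\omega)\|_{\bf K}$, so that both sides of the claimed identity are well defined.

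For the upper bound I would start from $E(F)=\{\int_\Omega f\,d\mu:f\in S_F\}$ and fix an arbitrary $f\in S_F$. Since $f(\omega)\in F(\omega)$ almost surely, the definition of the support function gives $\langle x^*,f(\omega)\rangle\le s(x^*,F(\omega))$ a.s. Because $x^*$ is a continuous linear functional it commutes with the Bochner integral, so $\langle x^*,\int_\Omega f\,d\mu\rangle=\int_\Omega\langle x^*,f\rangle\,d\mu\le\int_\Omega s(x^*,F)\,d\mu=E(s(x^*,F))$. Taking the supremum over all $f\in S_F$ and using $s(x^*,E(F))=\sup_{f\in S_F}\langle x^*,\int_\Omega f\,d\mu\rangle$ yields $s(x^*,E(F))\le E(s(x^*,F))$.

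For the reverse inequality, the key is to produce, for each $\varepsilon>0$, a selection $f\in S_F$ with $\langle x^*,f(\omega)\rangle\ge s(x^*,F(\omega))-\varepsilon$ a.s. I would invoke a Castaing representation of $F$, namely a countable family $\{f_n\}$ of measurable selections with $F(\omega)=\overline{\{f_n(\omega):n\in\mathbb N\}}$ for every $\omega$, so that $s(x^*,F(\omega))=\sup_n\langle x^*,f_n(\omega)\rangle$. Setting $A_n=\{\omega:\langle x^*,f_n(\omega)\rangle>s(x^*,F(\omega))-\varepsilon\}$, the $A_n$ are measurable and cover $\Omega$; disjointifying via $B_n=A_n\setminus\bigcup_{k<n}A_k$ and defining $f=\sum_n\mathbf 1_{B_n}f_n$ gives a measurable selection satisfying the desired pointwise bound. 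Decomposability of $S_F$ together with integrable boundedness (which dominates $f$ by the integrable function $\|F\|_{\bf K}$) ensures $f\in S_F$. Then $\langle x^*,\int_\Omega f\,d\mu\rangle=\int_\Omega\langle x^*,f\rangle\,d\mu\ge E(s(x^*,F))-\varepsilon$, and since $\int_\Omega f\,d\mu\in E(F)$ we obtain $s(x^*,E(F))\ge E(s(x^*,F))-\varepsilon$; letting $\varepsilon\downarrow 0$ finishes this direction.

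The main obstacle is precisely this reverse inequality: constructing a measurable, integrable selection whose pairing with $x^*$ nearly attains the pointwise supremum defining $s(x^*,F)$. This relies on the existence of a Castaing representation and on the decomposability of $S_F$, and without integrable boundedness one has to argue more carefully that the patched selection $f$ is genuinely integrable rather than merely measurable. Combining the two inequalities gives the stated identity $s(x^*,E(F))=E(s(x^*,F))$.
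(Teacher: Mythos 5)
A preliminary remark: the paper itself offers no proof of this lemma; it is invoked as a known result (Theorem 2.1.12 of \cite{LiOgV}). So your argument can only be judged against the standard Hiai--Umegaki-type proof of that result, and that is indeed the route you follow: the inequality $s(x^*,E(F))\le E(s(x^*,F))$ by moving the continuous functional $x^*$ through the Bochner integral and taking the supremum over $S_F$, and the reverse inequality via an approximately maximizing measurable selection built from a Castaing representation. The first inequality and the overall architecture are correct.

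The gap is in the reverse inequality, exactly at the point you flag but leave unrepaired, and it matters because the lemma assumes only $S_F\neq\emptyset$, not integrable boundedness. Two steps of your construction secretly use integrable boundedness. (i) The sets $A_n=\{\omega:\langle x^*,f_n(\omega)\rangle>s(x^*,F(\omega))-\varepsilon\}$ cover $\Omega$ only where $s(x^*,F(\omega))<\infty$; without integrable boundedness this may fail on a set of positive measure, and the case $E(s(x^*,F))=+\infty$ needs its own treatment. (ii) More seriously, the patched function $f=\sum_n\mathbf 1_{B_n}f_n$ is a measurable selection but need not be Bochner integrable --- the Castaing selections $f_n$ themselves need not lie in $S_F$ --- so $f\in S_F$ can fail; decomposability of $S_F$ licenses finite patchings of integrable selections, not countable patchings of arbitrary measurable ones. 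The standard repair is where the hypothesis $S_F\neq\emptyset$ actually enters: fix $g\in S_F$, put $C_N=\bigcup_{n\le N}\bigl(B_n\cap\{\|f_n\|\le N\}\bigr)$, and consider $f^{(N)}=\sum_{n\le N}\mathbf 1_{B_n\cap\{\|f_n\|\le N\}}f_n+\mathbf 1_{\Omega\setminus C_N}\,g$. Each $f^{(N)}$ lies in $S_F$, $C_N\uparrow\Omega$ modulo null sets, and since $s(x^*,F)\ge\langle x^*,g\rangle$ has integrable negative part, monotone and dominated convergence yield $\liminf_N\int\langle x^*,f^{(N)}\rangle\,d\mu\ge E\bigl(\min(s(x^*,F),M)\bigr)-\varepsilon$ for every truncation level $M$, which gives the reverse inequality even when $E(s(x^*,F))=+\infty$. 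With this modification your argument becomes a complete proof of the lemma as stated; as written, it proves it only under the stronger hypothesis that $F$ is integrably bounded --- which, to be fair, is all the paper ever uses, since the lemma is applied to $V_n\in L^2(\Omega;{\bf K}_{kc}(\frak X))$.
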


\begin{definition}\label{compactlyuniform} A sequence of random variables $\{F_{n}, n=1,2,\cdots\}\subset \mathcal M(\Omega;{\mathcal P}_k(\frak X))$ is called {\it compactly uniformly integrable in $L^{p}$ ($p\geq 1$)} if for each $\epsilon>0$, there exists a compact subset $\mathcal{K}_{\epsilon}\subset {\mathcal P}_{k}(\frak X)$ such that $E\left[\|F_{n}(\omega)\|^{p}_{\mathcal P} {I}_{\{F_n(\omega)\notin \mathcal{K}_{\epsilon}\}}\right]< \epsilon$,
	where $I$ is the indicator function. 
\end{definition}

$\mathbb R$-valued random variables $\xi$ and $ \eta$ are said to be {\em uncorrelated } if $cov (\xi,\eta)=0$ . In the following, we define the uncorrelated set-valued random variables.

\begin{definition}\label{def:uncorrelated}
	Let $F_{1},F_{2}$ be set-valued random variables.  $F_{1}$ and $F_{2}$ are said to be {\em uncorrelated} if $\sigma(x^*,F_1)$ and $\sigma(x^*,F_2)$ are
	uncorrelated $\mathbb R$-valued random variables  for any $x^*\in \frak X^*$.
	
	A sequence of set-valued random variables $\{F_1,F_2,\cdots\}$ is said to be  {\em pairwise uncorrelated} if  the sequence $\{\sigma(x^*,F_1), \sigma(x^*,F_2),\cdots\}$ is pairwise uncorrelated for any $x^*\in \frak X^*$. 
\end{definition}
`pairwise uncorrelated' will be referred to simply as `uncorrelated' hereafter.
\section{Weak law of large numbers}
\label{author_sec:3}

In this section, firstly, we shall study the weak law of large numbers for interval-valued uncorrelated random variables.  Then we will extend the result to the case with underlying space $\frak X=\mathbb R^d$ ($1<d<\infty$). 

To judge the uncorrelation of two interval-valued random variables, it reduces to consider the uncorrelation of endpoints according to the following result.
\begin{theorem}\label{thm:interval}
	For interval-valued random variables $F=[f_-,f_+], G=[g_-,g_+]\in \mathcal M(\Omega; {\mathcal P}_k(\mathbb R))$,
	$F$ and $G$ are uncorrelated if and only if $f_-$ and $g_-$ are  uncorrelated, $f_+$ and $g_+$ are uncorrelated as well.
\end{theorem}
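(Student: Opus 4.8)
The plan is to reduce the set-valued uncorrelation condition of Definition~\ref{def:uncorrelated}, which quantifies over all $x^*\in\frak X^*$, to the two scalar conditions on the endpoints by writing the support function of an interval explicitly. Here $\frak X=\mathbb R$, so $\frak X^*=\mathbb R$, and for an interval $A=[a_-,a_+]\in{\bf K}_k(\mathbb R)$ the support function splits according to the sign of the argument:
$$
s(x^*,A)=\sup_{a\in[a_-,a_+]}\,x^*a=
\begin{cases}
x^*a_+, & x^*>0,\\[2pt]
0, & x^*=0,\\[2pt]
x^*a_-, & x^*<0.
\end{cases}
$$
In particular $s(x^*,F)=x^*f_+$ and $s(x^*,G)=x^*g_+$ for $x^*>0$, while $s(x^*,F)=x^*f_-$ and $s(x^*,G)=x^*g_-$ for $x^*<0$. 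The entire argument rests on this sign-dependent formula together with the elementary scaling identity $\mathrm{cov}(cX,cY)=c^{2}\,\mathrm{cov}(X,Y)$ valid for any real constant $c$, which shows that the scalar factor $x^*$ is irrelevant to whether a covariance vanishes.

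For the necessity direction, I would assume $F$ and $G$ are uncorrelated and simply test the definition at two values of $x^*$. Taking $x^*=1>0$ gives $s(1,F)=f_+$ and $s(1,G)=g_+$, which are therefore uncorrelated; taking $x^*=-1<0$ gives $s(-1,F)=-f_-$ and $s(-1,G)=-g_-$, and since $\mathrm{cov}(-f_-,-g_-)=\mathrm{cov}(f_-,g_-)$ this yields that $f_-$ and $g_-$ are uncorrelated. For the sufficiency direction, I would assume $f_-,g_-$ uncorrelated and $f_+,g_+$ uncorrelated, fix an arbitrary $x^*\in\mathbb R$, and compute $\mathrm{cov}(s(x^*,F),s(x^*,G))$ by the three cases above: for $x^*>0$ it equals $(x^*)^{2}\mathrm{cov}(f_+,g_+)=0$, for $x^*<0$ it equals $(x^*)^{2}\mathrm{cov}(f_-,g_-)=0$, and for $x^*=0$ it is trivially $0$. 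Hence $s(x^*,F)$ and $s(x^*,G)$ are uncorrelated for every $x^*$, so $F$ and $G$ are uncorrelated.

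There is no deep obstacle here; the statement is essentially a direct translation once the support function is made explicit. The only points requiring care are the correct sign-dependence of $s(x^*,[a_-,a_+])$ (so that the left endpoints are seen only through negative $x^*$ and the right endpoints only through positive $x^*$) and the observation that uncorrelation is invariant under nonzero scalar multiplication, which is what lets the factors $x^*$ and $-1$ drop out. I would also note that all covariances are tacitly assumed finite, consistent with the $L^2$ setting implicit in the notion of uncorrelation, so that the quantities appearing above are well defined.
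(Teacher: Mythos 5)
Your proposal is correct and follows essentially the same route as the paper's own proof: both compute the sign-dependent support function $s(x^*,[a_-,a_+])$ explicitly, obtain necessity by testing at $x^*=\pm 1$ (equivalently $\pm Id$), and obtain sufficiency by writing a general $x^*$ as a scalar multiple and using the invariance of zero covariance under scalar multiplication. Your treatment is if anything slightly more careful than the paper's (the explicit $x^*=0$ case and the remark that $\mathrm{cov}(-f_-,-g_-)=\mathrm{cov}(f_-,g_-)$), but there is no substantive difference.
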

\begin{proof}
	For $\frak X=\mathbb R$, the unit sphere $S^*=\{1,-1\}$.
	
	If $F$ and $G$ are uncorrelated. That is, for $x^*\in \{1,-1\}$,  $\sigma(x^*,F)$ and $\sigma(x^*,G)$ are uncorrelated.
	
	When $x^*=1$, $\sigma(x^*,F)=f_+$ and 
	$\sigma(x^*, G)=g_+$. Then $f_+$ and $g_+$ are uncorrelated. When $x^*=-1$, similarly,
	$f_-$ and $g_-$ are uncorrelated.
	
	Conversely, assume $f_{+}$ and $g_{+}$ are uncorrelated, neither are $f_{-}$ and $g_{-}$. Then for $x^{*}=1$,
	$\sigma(x^*, F)$ and $\sigma(x^*,G)$ are uncorrelated. For any $x^*\in \mathbb R^*$, there exists an $a\in \mathbb R$, such that $x^*=a\times 1$. If $a\geq 0$
	$\sigma(x^*,F)=af_+$ and 	$\sigma(x^*,G)=ag_+$. If  $a< 0$,
	$\sigma(x^*,F)=af_-$ and 	$\sigma(x^*,G)=ag_-$.
	Then $\sigma(x^*,F)$ and $\sigma(x^*,G)$ are uncorrelated. By Definition \ref{def:uncorrelated}, $F, G$ are uncorrelated.
\end{proof}

\begin{example}
	Assume the real-valued vector $(\xi, \eta)$ are uniformly distributed in the elliptical disk
	$$\left\{(x,y)\in \mathbb R^{2}\mid\frac{(x-2)^2}{2^2}+\frac{(y-3)^2}{3^2}\leq 1\right\}.$$
	By simple calculation, we know that $\xi$ and $\eta$ are dependent and non-identically distributed, yet the correlation coefficient $\rho(\xi, \eta)=0$. Then by Theorem \ref{thm:interval}, the two interval-valued random variables $[0, \xi]$ and $[0, \eta]$ are uncorrelated while $ [0, \xi]$ and $[0,\eta]$ are not independent and follow different distributions.
	
In general, suppose non-negative real-valued random variables $\left \{f_{1}, \cdots, f_{n}, \cdots\right\}$ are uncorrelated, non-independent and non-identically distributed. Then the interval-valued random variables  $\left \{[0, f_{1}], \cdots, [0, f_{n}], \cdots\right\}$ satisfy the same properties.
\end{example}

\vskip10pt

Now we give a weak law of large numbers for uncorrelated ${\mathcal P}_{kc}(\mathbb R)$-valued random variables.

\begin{theorem} \label{thm:2}
	Let $\{V_{n}:n\in \mathbb N\}$ be a sequence of uncorrelated ${\mathcal P}_{kc}(\mathbb R)$-valued random variables such that for each n,
	$Var(\sigma(x^*,V_{k}))$ exists and for any $x^*\in \mathbb R^*(=\mathbb R)$,
	$$\frac{1}{n^2}\sum_{k=1}^{n}Var(\sigma(x^*,V_{k}))\longrightarrow0\ as \  n\rightarrow\infty.$$
	Then
	\begin{equation}\label{eqn:result00}
		P\Big\{d_{H}\Big(\frac{1}{n}\sum\limits_{k=1}^n V_{k},\frac{1}{n}\sum\limits_{k=1}^n E[V_{k}]\Big)>\varepsilon\Big\}\longrightarrow0 \ as \  n\rightarrow\infty.
	\end{equation}
\end{theorem}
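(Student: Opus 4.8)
\section*{Proof proposal}

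The plan is to push everything onto the real line via the support function and then run the classical Chebyshev argument. First I would invoke the isometric representation of the Hausdorff metric, $d_H(A,B)=\sup_{x^*\in S^*}|s(x^*,A)-s(x^*,B)|$, valid on $\mathbf K_{kc}(\mathbb R)$. The decisive simplification is that the underlying space is $\mathbb R$, so, exactly as in the proof of Theorem \ref{thm:interval}, the unit sphere is the two-point set $S^*=\{Id,-Id\}$; hence the supremum defining $d_H$ collapses into a maximum over two terms, and the event in (\ref{eqn:result00}) becomes tractable by a union bound.

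Next I would use that the support function is additive under Minkowski addition and positively homogeneous, so that for each fixed $x^*$,
$$s\Big(x^*,\tfrac1n\sum_{k=1}^n V_{k}\Big)=\tfrac1n\sum_{k=1}^n s(x^*,V_{k}),$$
while Lemma \ref{lem:change} gives $s\big(x^*,\tfrac1n\sum_{k=1}^n E[V_{k}]\big)=\tfrac1n\sum_{k=1}^n E[s(x^*,V_{k})]$. Setting $\xi_k=s(Id,V_{k})$ and $\eta_k=s(-Id,V_{k})$, the Hausdorff distance appearing in (\ref{eqn:result00}) equals
$$\max\Big\{\Big|\tfrac1n\sum_{k=1}^n(\xi_k-E\xi_k)\Big|,\ \Big|\tfrac1n\sum_{k=1}^n(\eta_k-E\eta_k)\Big|\Big\}.$$
Thus the set-valued problem is reduced to controlling two ordinary sample-mean deviations of real random variables.

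Then, by a union bound, the event $\{d_H>\varepsilon\}$ is contained in the union of $\{|\tfrac1n\sum_{k=1}^n(\xi_k-E\xi_k)|>\varepsilon\}$ and $\{|\tfrac1n\sum_{k=1}^n(\eta_k-E\eta_k)|>\varepsilon\}$, and I would bound each by Chebyshev's inequality. Because the $V_{k}$ are uncorrelated, for each fixed $x^*$ the sequence $s(x^*,V_{k})$ is pairwise uncorrelated, so all cross-covariance terms vanish and $Var\big(\tfrac1n\sum_{k=1}^n\xi_k\big)=\tfrac1{n^2}\sum_{k=1}^n Var(\xi_k)=\tfrac1{n^2}\sum_{k=1}^n Var(s(Id,V_{k}))$, and likewise for $\eta_k$ with $x^*=-Id$. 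The hypothesis, applied at the two points $x^*=Id$ and $x^*=-Id$, forces both sums to tend to $0$, and the union bound then yields (\ref{eqn:result00}).

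The argument is essentially routine once this reduction is set up; the only step requiring genuine care is the collapse of the supremum defining $d_H$ into a finite maximum, which hinges on the one-dimensionality of the underlying space, i.e.\ on $S^*=\{Id,-Id\}$. In higher dimension this would be the real obstacle, since $S^*$ is uncountable and one cannot move the supremum through the probability so cheaply; for $\mathbf K_{kc}(\mathbb R)$, however, the two-point sphere makes the union bound completely legitimate.
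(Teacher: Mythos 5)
Your proposal is correct and follows essentially the same route as the paper: both reduce $d_H$ via the support-function representation to the two directions $Id$ and $-Id$ of the sphere $S^*$, use pairwise uncorrelation of $s(x^*,V_k)$ to kill the cross-covariance terms, and conclude by a second-moment (Chebyshev/Markov) bound with the hypothesis applied at these two points. The only cosmetic difference is that you split the event by a union bound before applying Chebyshev to each direction, whereas the paper applies Markov to $d_H^2$ directly and bounds the squared maximum by the sum of the two squares; the two devices are interchangeable here.
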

\begin{proof}
	For any $\varepsilon>0$. By the Markov inequality and the equivalent definition of Hausdorff metric, we have
	
	\begin{equation}\label{eqn:equivalent}
		\begin{split}
			&P\Big\{d_{H}\Big(\frac{1}{n}\sum\limits_{k=1}^n V_{k},\frac{1}{n}\sum\limits_{k=1}^n E[V_{k}]\Big)>\varepsilon\Big\}\\
			&\leq\frac{1}{(\varepsilon n)^{2}}E\Big[d_{H}\Big(\sum\limits_{k=1}^n V_{k},\sum\limits_{k=1}^n E[V_{k}]\Big)\Big]^{2}\\
			&=\frac{1}{(\varepsilon n)^{2}}E\Big[\sup_{x^{*}\in S^{*}}|\sigma(x^{*}, \sum\limits_{k=1}^nV_{k})-\sigma(x^{*}, \sum\limits_{k=1}^nE[V_k])|\Big]^2\\
			&=\frac{1}{(\varepsilon n)^{2}}E\Big[\sup_{x^{*}\in S^{*}}|\sigma(x^{*}, \sum\limits_{k=1}^nV_{k})-\sigma(x^{*}, \sum\limits_{k=1}^nE[V_k])|^2\Big]
		\end{split}
	\end{equation}
	
	For the real space $\mathbb R$, $S^{*}=\{1,-1\}$. Denote $\sigma(1,V_{k})=y_{k}$, $\sigma(-1,V_{k})=z_{k}$. Then by Definition \ref{def:uncorrelated}, both $\{y_{k}:k\geq 1\}$ and  $\{z_{k}:k\geq 1\}$ are uncorrelated real-valued random variable sequences.
	
	\noindent Therefore
	\begin{equation}\label{eqn:limit}
		\begin{split}
			&\frac{1}{(\varepsilon n)^{2}}E\Big[\sup_{x^{*}\in S^{*}}|\sigma(x^{*},\sum\limits_{k=1}^n
			V_{k})-\sigma(x^{*},\sum\limits_{k=1}^n E[V_{k}])|^{2}\Big]\\
			&\leq\frac{1}{(\varepsilon n)^{2}}E\Big[|\sum\limits_{k=1}^n\sigma(1, V_{k})-\sum\limits_{k=1}^n\sigma(1,
			E[V_{k}])|^{2}\\
			&\hspace{0.5cm}+|\sum\limits_{k=1}^n\sigma(-1, V_{k})-\sum\limits_{k=1}^n\sigma(-1, E[V_{k}])|^{2}\Big]\\
			&=\frac{1}{(\varepsilon n)^{2}}E\Big[(\sum\limits_{k=1}^ny_{k}-\sum\limits_{k=1}^nE[y_{k}])^{2}
			+(\sum\limits_{k=1}^nz_{k}-\sum\limits_{k=1}^nE[z_{k}])^{2}\Big]\\
			&=\frac{1}{(\varepsilon n)^{2}}E\Big[\sum\limits_{k=1}^n(y_{k}-E[y_{k}])^{2}+\sum\limits_{k\neq
				l}(y_{k}-E[y_{k}])(y_{l}-E[y_{l}])\\
			&\hspace{0.5cm}+\sum\limits_{k=1}^n(z_{k}-E[z_{k}])^{2}+\sum\limits_{k\neq l}(z_{k}-E[z_k])(z_{l}-E[z_l])\Big]\\
			&=\frac{1}{(\varepsilon
				n)^{2}}E\Big[\sum\limits_{k=1}^n(y_{k}-E[y_k])^{2}+\sum\limits_{k=1}^n(z_{k}-E[z_k])^{2}\Big]\\
			&=\frac{1}{(\varepsilon n)^{2}}\Big[\sum\limits_{k=1}^nVar(y_{k})+\sum\limits_{k=1}^nVar(z_{k})\Big]\\
			&\longrightarrow0\ \ \ {\mbox as} \ \  n\rightarrow\infty,
		\end{split}
	\end{equation}
	which together with \eqref{eqn:equivalent} yields the result \eqref{eqn:result00}.
\end{proof}

\vskip10pt
The following is an example of weak law of large numbers, in which the sequence is uncorrelated but is non-independent and has no identical distribution. That means the condition is weaker than the existing results of laws of large numbers such as in \cite{AV,Hiai,TaIna} etc.

\begin{example}\label{ex:3}
	Let $Y$ be a random variable following a Bernoulli distribution $B(1,\frac{1}{2})$, and let $\{X_n​,n=1,2, \cdots,\}$ be independent and identically distributed random variables with uniform distribution $ U(−1,1)$, which are independent of Y. Define $Z_n​=\frac{n}{n+1}X_n Y$, and let the interval-valued random variable $V_n​:=[Z_n​,Z_n​+1]$. Then the sequence $\{V_n​,n=1,2,\cdots,\}$ is uncorrelated, non-independent, non-identically distributed, and obeys the weak law of large numbers.
\end{example}
\begin{proof}
	It is easy to see that
	 $$E[Z_n]=\frac{n}{n+1}E[X_nY]=\frac{n}{n+1}E[X_n]E[Y]=0,$$
	 $$E[Z_nZ_m]=E[X_nX_mY^2]=\frac{n}{n+1}\frac{m}{m+1}E[X_n]E[X_m]E[Y^2]=0 \ for \ m\ne n,$$
	 which means that $X_m$ and $X_n$ are uncorrelated.
	 In addition, for $m\ne n$
	 $$P(X_mY\le 0.5, \ Y_nY\le 0.5)=\frac{25}{32}\ne P(X_nY\le 0.5)P(X_mY\le 0.5)=(\frac{7}{8})^2,$$
	 then $Z_n$ and $Z_m$ are non-independent.
	 
	 $$Var(Z_n+1)=Var(Z_n)=(\frac{n}{n+1})^2E[(X_nY)^2]=(\frac{n}{n+1})^2E[X_n^2]E[Y^2]=\frac{1}{6}(\frac{n}{n+1})^2.$$
	 Therefore, 
	 	$$\frac{1}{n^2}\sum_{i=1}^{n}Var(\sigma(x^*,V_{i}))\le \frac{1}{n^2}\frac{n}{3}(\frac{n}{n+1})^2\rightarrow 0 \ \ as \ n\rightarrow \infty.$$
	 	By Theorem \ref{thm:2},  $\{V_n​,n=1,2,\cdots,\}$ obeys the weak law of large numbers. 
\end{proof}
To make the conclusion of Example \ref{ex:3} more intuitive, we visualize the results using R software (version 4.3.1). Figure \ref{fig1} is the simulation result. The solid blue vertical lines represent the intervals corresponding to the sample mean $\frac{1}{n}​\sum_{i=1}^{n}​V_i​$ for sample sizes $n$ from $1$ to $300$. For clarity, only $50$ of these sample means are displayed. The two red dashed lines represent the left and right endpoints of the arithmetic average of the expectations, which is the interval $[0,1]$. The simulation results show that as the sample size increases, the deviation between the left and right endpoints of the sample mean and the expected mean becomes smaller.
\begin{figure}[H]
	\centering
	\includegraphics[width=0.75\textwidth]{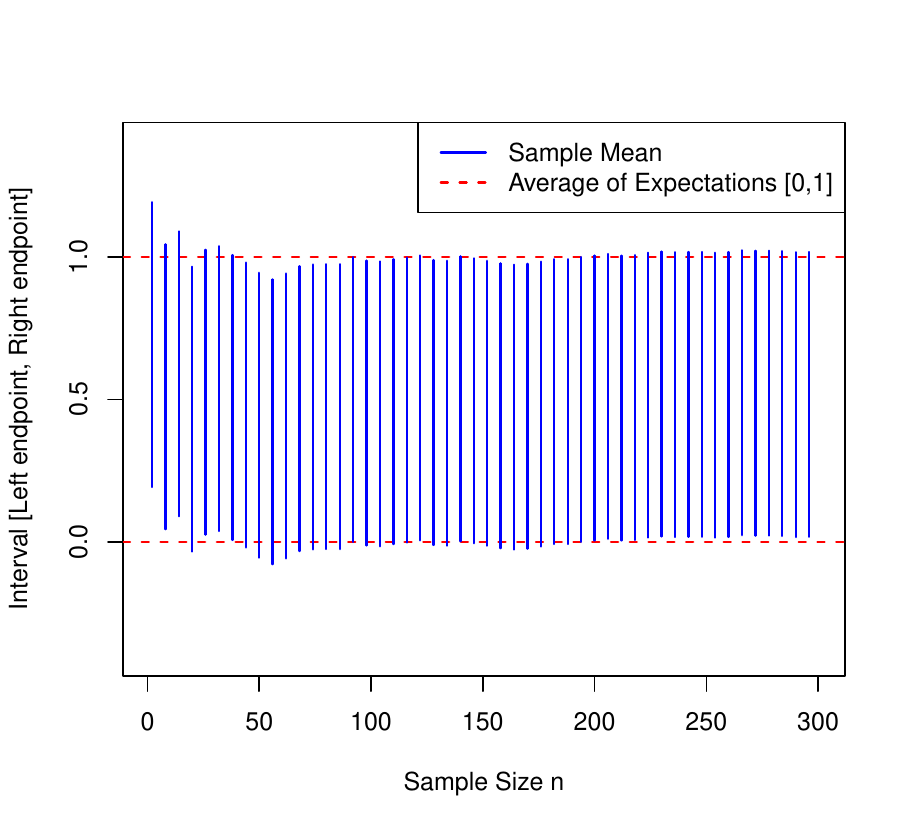}
	\caption{Simulation: Sample Mean vs Theoretical Interval [0,1]}
	\label{fig1}
\end{figure}

Here is another example.
\begin{example}\label{ex2}
	Let the real-valued random vector $(X_1, X_2,\cdots,X_n)$ be uniformly distributed in the following $n$-dimensional ellipsoid
	$$\frac{x_1^2}{a_1^2}+\frac{x_2^2}{a_2^2}+\cdots+\frac{x_n^2}{a_n^2}\leq 1.$$
	We assume that $a_{i}>0$ for each $i$ and the equality $a_1=a_2=\cdots=a_n$ does not hold. The joint density function is
	$$f(x_1,x_2,\cdots,x_n)=\left\{\begin{array}{cc}
		\frac{\Gamma(\frac{n}{2}+1)}{\pi^{\frac{n}{2}}\prod\limits_{i=1}^n a_i},\ \ \ x_i\in(-a_i,a_i), \ i=1,2,\cdots,n;\\
		0,\ \ \ \ \hspace{1cm}otherwise.
	\end{array}
	\right.
	$$
		Define 
		$$(Y_1,Y_2,\cdots,Y_n)=(X_1+a_1,X_2+a_2,\cdots,X_n+a_n)$$ and $V_1=[X_1,Y_1],\cdots,V_n=[X_n,Y_n]$.
		
	Then for sufficiently large $n$, the arithmetic average of the random variables $V_1​,…,V_n​$, i.e.,$\frac{1}{n} \sum_{i=1}^{n}​V_i$​, can be approximated by the arithmetic average of their expectations $\frac{1}{n} \sum_{i=1}^{n}​E[V_i]$​.
	\end{example}
	\begin{proof}
	It is not difficult to obtain that $E[X_i]=0 (i=1,\cdots,n)\}$ and
	$cov[X_i, X_j]=0 \ for \ i\neq j.$
	Therefore the sequence $(X_1, X_2, \cdots,X_n)$ is uncorrelated with different distributions.
	For any pair $(i, j)$, we know that $X_i$ and $X_j$ are not independent. Then the sequence $X_1,X_2,\cdots,X_n$ are not independent.
	
	For each $i$, we have $Var(X_i)=E(X_i^2)=\frac{a_i^2}{n+2}$.

	Random variables $(Y_1,Y_2,\cdots,Y_n)$ are non-negative and uncorrelated with $E[Y_{i}]=a_{i}$ for each $i$. $(Y_1,Y_2,\cdots,Y_n)$ are neither independent nor  identical distributed.
	$(Y_1,Y_2,\cdots,Y_n)$ are  uniformly distributed in the ellipsoid
	$$\frac{(y_1-a_1)^2}{a_1^2}+\frac{(y_2-a_2)^2}{a_2^2}+\cdots+\frac{(y_n-a_n)^2}{a_n^2}\leq 1.$$
	Clearly, $Var(Y_i)=Var(X_i)=\frac{a_i^2}{n+2}$.
	
 We have $E[V_{i}]=[0,a_{i}]$ for each $i$. By Theorem \ref{thm:interval}, the interval-valued random sequence $V_1, \cdots,V_n$  are neither independent nor
	identical distributed but uncorrelated.
	By simple calculation, we have
	$$
	\frac{1}{n^2}\sum\limits_{i=1}^n Var(Y_i)=\frac{1}{n^2}\sum\limits_{i=1}^n \frac{a_i^2}{n+2}
	=\frac{1}{n^2(n+2)}\sum\limits_{i=1}^n a_i^2.
	$$
	
	For fixed $n$, we can confine $a_{1},\cdots, a_{n}$ such that $a_{i}\leq \sqrt{n}$ for each $i$. Then we obtain
	$$\frac{1}{n^2(n+2)}\sum\limits_{i=1}^n a_i^2\leq \frac{1}{n+2}
	\rightarrow 0\ \ \ \ \ \ as\ \ n\rightarrow\infty.
	$$
	Moreover, for any $\epsilon>0$
	\begin{equation*}\label{eqn:result1}
		P\Big\{d_{H}\Big(\frac{1}{n}\sum\limits_{i=1}^n V_{i},\frac{1}{n}\sum\limits_{i=1}^n [0,a_{i}]\Big)>\varepsilon\Big\}\leq \frac{2 \sum\limits_{i=1}^n Var(Y_i)}{(\epsilon n)^{2}}\longrightarrow0 \ \ as \ \  n\rightarrow\infty,
	\end{equation*}
	which implied the result.
	
\end{proof}
To provide a more intuitive illustration of the results from Example \ref{ex2}, we set  $a_{i}=1+\frac{i}{n}$ ($i=1,\dots,n$) and use R software to generate random numbers from the corresponding distribution for visualization.
 
Figure \ref{fig2} presents the random intervals $\{V_1,\dots,V_{100}\}$ when $\{X_{1},\dots,X_{100}\}$ are distributed on an $100$-dimensional ellipsoid.  The blue vertical lines represent the 100 sample intervals. The two solid red horizontal lines denote the left and right endpoints of the sample mean of these intervals. The two dashed green horizontal lines indicate the left and right endpoints of the mean of the expectations, respectively.

\begin{figure}[H]
	\centering
	\includegraphics[width=0.65\textwidth]{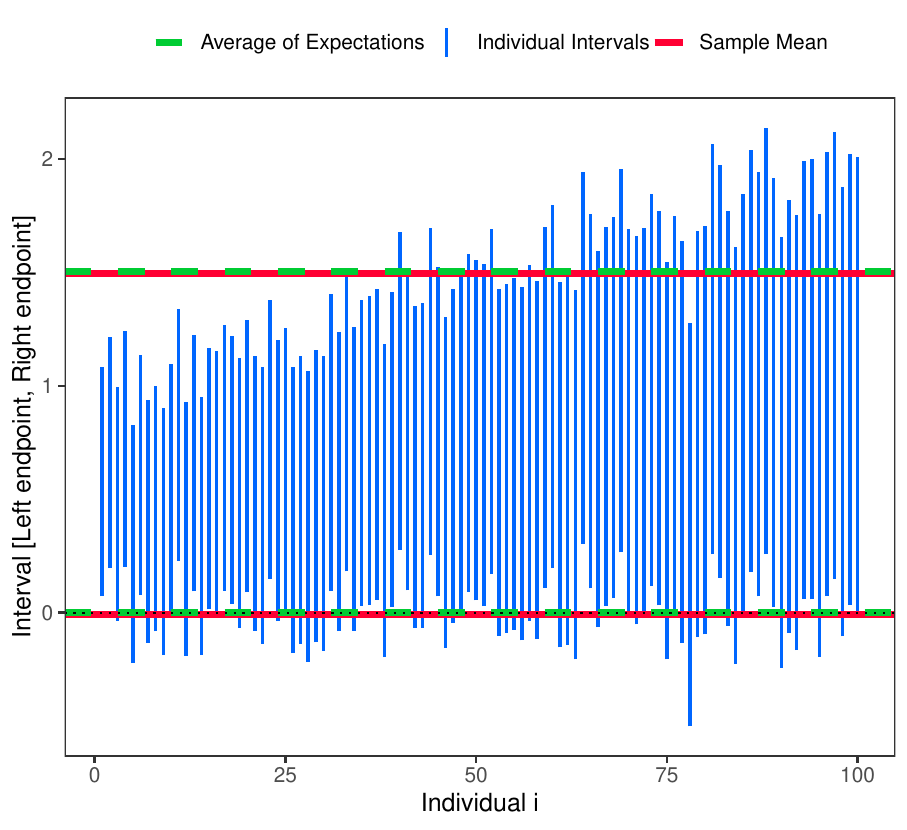}
	\caption{Random intervals $\{V_1,\dots,V_{100}\}$ generated from the uniform distribution on a $100$-dimensional ellipsoid.}
	\label{fig2}
\end{figure}

Figure \ref{fig3} illustrates the approximate relationship between the sample mean $\frac{1}{n}\sum_{i=1}^{n}V_i$ and the average of expectations $\frac{1}{n}\sum_{i=1}^{n}[0,a_i]$, where $V_i=[X_i, X_i+a_i]$, $a_i=1+\frac{i}{n}$ $ i=1,\cdots ,n$. And random variables $X_1,\dots,X_n$ follow the uniform distribution on an $n$-dimensional ellipsoid, where $n$ ranges from $5$ to $500$. It can be seen that the interval corresponding to the sample mean (red) and the interval corresponding to the average of expectations (blue) become closer as $n$ increases.

\begin{figure}[H]
	\centering
	\includegraphics[width=0.65\textwidth]{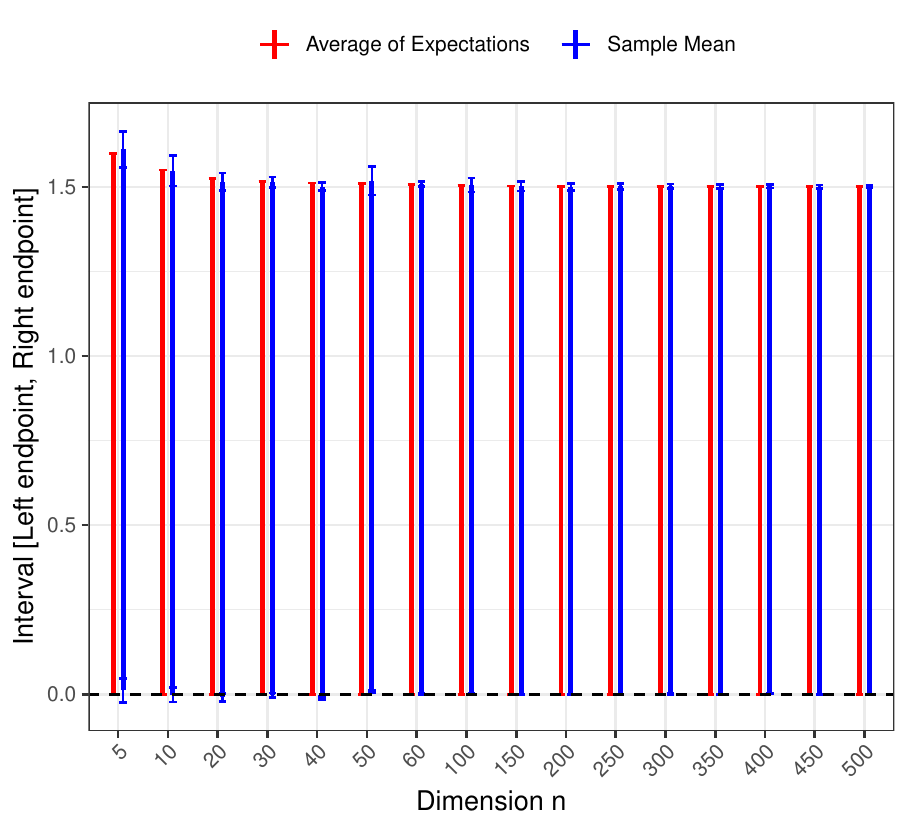}
	\caption{Approximate relationship between the sample mean and the expectation of the sample mean.}
	\label{fig3}
\end{figure}

For the underlying space  $\frak X=\mathbb R$, thanks to the unit sphere of $\frak X^*$ containing only two elements, the proof of WLLN  is relatively simpler, and the required conditions also are weaker. If the dimension of underlying space $\frak X$ is greater than $1$, even for finite-dimensional Euclidean space $\mathbb R^d (d>1)$, the unit sphere $S^*$ of $\frak X^*$ contains infinitely many elements. This makes the situation much more complex than $\mathbb R$ and it is easy to make mistakes if one is not careful. Here we only deal with the case of underlying space being of finite dimension.  For the sake of convenience in proving WLLN and SLLN for set-valued random variables, we first give the following Lemmas.
\begin{lemma}\label{compactuniform}
	Assume the dimension $dim\frak X<\infty$. Let $\{V_{n}:n\in \mathbb N\}$ be a sequence of ${\mathcal P}_{kc}(\frak X)$-valued  random variables, which are  $L^1$-integrably bounded. Then  for any $\varepsilon>0$, there exists finitely many points $\{y_1^*,\cdots,y_m^*\}\subset S^*$, such that for any $x^*\in S^*$, the following inequality holds:
	\begin{equation}\label{keyinequality}
		\begin{split}
			&\sup\limits_{x^*\in S^*}\Big|\sigma(x^*,\sum\limits_{k=1}^nV_k)-\sigma(x^*,\sum\limits_{k=1}^nE[V_k])\Big|\\
			&\leq \max\limits_{1\leq i\leq m}\Big\{\Big|\sigma(y_{i}^*,\sum\limits_{k=1}^nV_k)-\sigma(y_{i}^*,\sum\limits_{k=1}^nE[V_k])\Big|+\varepsilon
			\Big( \Big\|\sum\limits_{k=1}^nV_k\Big\|_{\mathcal P}+ \Big\|\sum\limits_{k=1}^nE[V_k]\Big\|_{\mathcal P}\Big)\Big\} .
		\end{split}
	\end{equation}
\end{lemma}
\begin{proof}
	For finitely dimensional space $\frak X$, the unit sphere $S^*$ of $\frak X^*$ is a compact set. Then for any $\varepsilon>0$, there exists a finite open cover $\{N(y_{i}^*; \varepsilon): i=1,\cdots,m\}$ such that $S^*\subset \bigcup\limits_{i=1}^{m}N(y_i^*;\varepsilon)$, where $y_i^*\in S^*$ and $N(y_i^*; \varepsilon)=\{y^*\in \frak X^*\mid \|y^*-y_i^*\|_{\frak X^*}<\varepsilon\}$. For any $x_i^* \in N(y_{i}^*;\varepsilon)$, by Proposition \ref{prop}, we have
	\begin{equation}\label{keyinequality1}
		\begin{split}
			&\Big|\sigma(x_i^*,\sum\limits_{k=1}^nV_k)-\sigma(x_i^*,\sum\limits_{k=1}^nE[V_k])-\sigma(y_{i}^*,
			\sum\limits_{k=1}^nV_k)+\sigma(y_{i}^*,\sum\limits_{k=1}^nE[V_k])\Big|\\
			&\leq \Big|\sigma(x_i^*,\sum\limits_{k=1}^nV_k)-\sigma(y_i^*,\sum\limits_{k=1}^nV_k)\Big|+\Big|\sigma(x_{i}^*,
			\sum\limits_{k=1}^nE[V_k])-\sigma(y_{i}^*,\sum\limits_{k=1}^nE[V_k])\Big|\\
			&\leq \|x_i^*-y_{i}^*\|_{\frak X^*}\Big\|\sum\limits_{k=1}^nV_k\Big\|_{\mathcal P}+\|x_i^*-y_{i}^*\|_{\frak X^*}\Big\|\sum\limits_{k=1}^nE[V_k]\Big\|_{\mathcal P}\\
			&=\|x_i^*-y_{i}^*\|_{\frak X^*}\Big( \Big\|\sum\limits_{k=1}^nV_k\Big\|_{\mathcal P}+ \Big\|\sum\limits_{k=1}^nE[V_k]\Big\|_{\mathcal P}\Big)\\
			&\leq  \varepsilon  \Big( \Big\|\sum\limits_{k=1}^nV_k\Big\|_{\mathcal P}+ \Big\|\sum\limits_{k=1}^nE[V_k]\Big\|_{\mathcal P}\Big).
		\end{split}
	\end{equation}
	Then by the triangle inequality and continuity of the support function $\sigma$ with respect to $x^*$ in the compact set $S^*$, it holds that
	\begin{equation}\label{keyinequality2}
		\begin{split}
			&\sup\limits_{x^*\in S^*}\Big|\sigma(x^*,\sum\limits_{k=1}^nV_k)-\sigma(x^*,\sum\limits_{k=1}^nE[V_k])\Big|\\
			&=\max\limits_{1\leq i\leq m}\Big\{\sup\limits_{x_i^*\in N(y^*;\varepsilon)}\Big|\sigma(x_i^*,\sum\limits_{k=1}^nV_k)-\sigma(x_i^*,\sum\limits_{k=1}^nE[V_k])\Big|\Big\}\\
			&\leq \max\limits_{1\leq i\leq m}\Big\{\Big|\sigma(y_{i}^*,\sum\limits_{k=1}^nV_k)-\sigma(y_{i}^*,\sum\limits_{k=1}^nE[V_k])\Big|\\
			& \hspace{0.5cm} +\sup\limits_{x^*_i\in N(y_i^*;\varepsilon)}\|x_i^*-y_{i}^*\|_{\frak X^*}\Big( \Big\|\sum\limits_{k=1}^nV_k\Big\|_{\mathcal P}+ \Big\|\sum\limits_{k=1}^nE[V_k]\Big\|_{\mathcal P}\Big)\Big\}\\
			&\leq \max\limits_{1\leq i\leq m}\Big\{\Big|\sigma(y_{i}^*,\sum\limits_{k=1}^nV_k)-\sigma(y_{i}^*,\sum\limits_{k=1}^nE[V_k])\Big|+\varepsilon
			\Big( \Big\|\sum\limits_{k=1}^nV_k\Big\|_{\mathcal P}+ \Big\|\sum\limits_{k=1}^nE[V_k]\Big\|_{\mathcal P}\Big)\Big\} 
			.
		\end{split}
	\end{equation}
\end{proof}
\begin{lemma}\label{expectsupinequality}
	Assume the dimension $dim\frak X<\infty$. Let $\{V_{n}:n\in \mathbb N\}$ be a sequence of ${\mathcal P}_{kc}(\frak X)$-valued  random variables, which are  uncorrelated and compactly uniformly integrable in $L^2$. Then  for any $\varepsilon>0$, there exists finitely many points $\{y_1^*,\cdots,y_m^*\}\subset S^*$, such that for any $x^*\in S^*$, the following inequality holds:
	\begin{equation}\label{expectsup}
		\begin{split}
			&\frac{1}{\varepsilon^2n^2}E\Big[\sup\limits_{x^*\in S^*}\Big|\sigma(x^*,\sum\limits_{k=1}^nV_k)-\sigma(x^*,\sum\limits_{k=1}^nE[V_k])
			\Big|^2\Big]\\
			&\leq \frac{1}{\varepsilon^2}\sum\limits_{i=1}^{m}\frac{\sum\limits_{k=1}^n Var(\sigma(y_{i}^*,V_k))}{n^2}
			+4\sqrt{\sum\limits_{i=1}^{m}\frac{\sum\limits_{k=1}^n Var(\sigma(y_{i}^*,V_k))}{n^2}}+4\varepsilon^2.
		\end{split}
	\end{equation}
\end{lemma}
\begin{proof}
	According to the Definition \ref{compactlyuniform}, for any $\varepsilon>0$, there exists a compact subset $\mathcal K_{\varepsilon}\subset \mathcal P_{k}(\frak X)$, such that
	$$E\left[\|V_n\|^2_{\mathcal P}I_{\{V_n\notin \mathcal K_{\varepsilon}\}}\right]<\varepsilon \ \ 	for \ \ all \ \ V_{n}.$$
	Then we have 
	\begin{equation}\label{squareintegral}
		E\left[\|V_n\|^2_{\mathcal P}\right]=E\left[\|V_n\|^2_{\mathcal P}I_{\{V_n\notin \mathcal K_{\varepsilon}\}}\right]+E\left[\|V_n\|^2_{\mathcal P}I_{\{V_n\in \mathcal K_{\varepsilon}\}}\right]
		\leq \varepsilon+\sup\limits_{A\in \mathcal K_{\varepsilon}}\|A\|^2_{\mathcal P}\triangleq M,
	\end{equation}
	where $M$ depends on $\varepsilon$.
	
	By \eqref{squareintegral}, we have $E\left[\|V_n\|_{\mathcal P}\right]<\sqrt{M}$ and
	\begin{equation}\label{bounded}
		\begin{split}
			&E\Big[ \Big\|\sum\limits_{k=1}^nV_k\Big\|_{\mathcal P}+ \Big\|\sum\limits_{k=1}^nE[V_k]\Big\|_{\mathcal P}\Big]^2\\
			&\leq E\Big[ \sum\limits_{k=1}^n(\|V_k\|_{\mathcal P}+ E[\|V_k\|_{\mathcal P}])\Big]^2\\
			&=\sum\limits_{i,j=1}^{n} E\Big[\left(\|V_i\|_{\mathcal P}+ E[\|V_i\|_{\mathcal P}]\right)(\|V_j\|_{\mathcal P}+E[\|V_j\|_{\mathcal P}])\Big]\\
			&\leq \sum\limits_{i,j=1}^n E\left[\|V_j\|_{\mathcal P}\|V_j\|_{\mathcal P}\right]+3\sum\limits_{i=1}^{n}E\left[\|V_i\|_\mathcal P\right]\sum\limits_{j=1}^{n}E\left[\|V_j\|_\mathcal P\right]\\
			&\leq 4 n^2M. \ (\ by \ Schwarz\  inequality\  and \ \eqref{squareintegral})
		\end{split}
	\end{equation}
	Applying Lemma \ref{compactuniform}, with $\varepsilon$	replaced by $\frac{\varepsilon^2}{\sqrt{M}}$, it follows that there exists $\{y^*_1,\cdots,y^*_m\}\subset S^*$, such that
	\begin{equation}\label{keyinequality3}
		\begin{split}
			&\sup\limits_{x^*\in S^*}\Big|\sigma(x^*,\sum\limits_{k=1}^nV_k)-\sigma(x^*,\sum\limits_{k=1}^nE[V_k])\Big|\\
			&\leq \max\limits_{1\leq i\leq m}\Big\{\Big|\sigma(y_{i}^*,\sum\limits_{k=1}^nV_k)-\sigma(y_{i}^*,\sum\limits_{k=1}^nE[V_k])\Big|+\frac{\varepsilon^2}{\sqrt{M}}
			\Big( \Big\|\sum\limits_{k=1}^nV_k\Big\|_{\mathcal P}+ \Big\|\sum\limits_{k=1}^nE[V_k]\Big\|_{\mathcal P}\Big)\Big\} 
			.
		\end{split}
	\end{equation}
	By the above inequality \eqref{keyinequality3}, it holds that
	\begin{eqnarray*}
		&&\frac{1}{\varepsilon^2n^2}E\Big[\sup\limits_{x^*\in S^*}\Big|\sigma(x_i^*,\sum\limits_{k=1}^nV_k)-\sigma(x_i^*,\sum\limits_{k=1}^nE[V_k])
		\Big|^2\Big]\\
		&&\leq \frac{1}{\varepsilon^2n^2}E\Big[\Big(\max\limits_{1\leq i\leq
			m}\Big|\sigma(y_{i}^*,\sum\limits_{k=1}^nV_k)-\sigma(y_{i}^*,\sum\limits_{k=1}^nE[V_k])\Big|\\
		&& \hspace{0.5cm}
		+\frac{\varepsilon^2}{\sqrt{M}}
		\Big( \Big\|\sum\limits_{k=1}^nV_k\Big\|_{\mathcal P}+ \Big\|\sum\limits_{k=1}^nE[V_k]\Big\|_{\mathcal P}\Big)\Big)^2\Big] \\
		&&=\frac{1}{\varepsilon^2n^2}E\Big[\max\limits_{1\leq i\leq
			m}\Big\{\Big|\sigma(y_{i}^*,\sum\limits_{k=1}^nV_k)-\sigma(y_{i}^*,\sum\limits_{k=1}^nE[V_k])\Big|^2 \\
		&&\hspace{0.5cm}
		+\frac{2\varepsilon^2}{\sqrt{M}}\Big|\sigma(y_{i}^*,\sum\limits_{k=1}^nV_k)-\sigma(y_{i}^*,\sum\limits_{k=1}^nE[V_k])\Big|
		\Big( \Big\|\sum\limits_{k=1}^nV_k\Big\|_{\mathcal P}+ \Big\|\sum\limits_{k=1}^nE[V_k]\Big\|_{\mathcal P}\Big)
		\Big\}\\
		&&\hspace{0.5cm}+\frac{\varepsilon^4}{M}\Big( \Big\|\sum\limits_{k=1}^nV_k\Big\|_{\mathcal P}+
		\Big\|\sum\limits_{k=1}^nE[V_k]\Big\|_{\mathcal P}\Big)^2
		\Big]\triangleq K.
	\end{eqnarray*}
	For any $x^*\in S^*$, by Lemma \ref{lem:change} and  Definition \ref{def:uncorrelated}, for $k\neq l$, we have
	\begin{equation}\label{eqn:2}
		\begin{split}
			&E\Big\{\Big[\sigma(x^{*},V_{k})-\sigma(x^{*},E[V_{k}])\Big]
			\Big[\sigma(x^{*},V_{l})-\sigma(x^{*},E[V_{l}])\Big]\Big\}\\
			&=E\Big\{\Big[\sigma(x^{*},V_{k})-E[\sigma(x^{*},V_{k})]\Big]
			\Big[\sigma(x^{*},V_{l})-E[\sigma(x^{*},V_{l})]\Big]\Big\}=0.
		\end{split}
	\end{equation}
	By Proposition \ref{prop}, Lemma \ref{lem:change}, Schwarz inequality and \eqref{eqn:2}, we have
	\begin{eqnarray*}
		&&K\leq \frac{1}{\varepsilon^2n^2}E\Big[\max\limits_{1\leq i\leq
			m}\Big|\sigma(y_{i}^*,\sum\limits_{k=1}^nV_k)-\sigma(y_{i}^*,\sum\limits_{k=1}^nE[V_k])\Big|^2\Big]\\
		&&\hspace{0.5cm}+\frac{2}{\sqrt{M}n^2}\sqrt{E\Big[\max\limits_{1\leq i\leq m}
			\Big|\sigma(y_{i}^*,\sum\limits_{k=1}^nV_k)-\sigma(y_{i}^*,\sum\limits_{k=1}^nE[V_k])\Big|^2\Big]}
		\sqrt{
			E\Big[\Big( \Big\|\sum\limits_{k=1}^nV_k\Big\|_{\mathcal P}+
			\Big\|\sum\limits_{k=1}^nE[V_k]\Big\|_{\mathcal P}\Big)^2\Big ]}\\
		&&\hspace{0.5cm}+ \frac{\varepsilon^2}{Mn^2}E\Big( \Big\|\sum\limits_{k=1}^nV_k\Big\|_{\mathcal P}+
		\Big\|\sum\limits_{k=1}^nE[V_k]\Big\|_{\mathcal P}\Big)^2
		\\
		&&\leq\frac{1}{\varepsilon^2n^2}\sum\limits_{i=1}^m \Big[\sum\limits_{k=1}^n E[\sigma(y_{i}^*,V_k)-\sigma(y_{i}^*,
		E[V_k])]^2\\
		&& \hspace{0.5cm}+\sum\limits_{k\neq j}E[\sigma(y_{i}^*,V_k)-\sigma(y_{i}^*, E[V_k])][\sigma(y_{i}^*,V_j)-\sigma(y_{i}^*, E[V_j])]\Big]\\
		&&\hspace{0.5cm}+\frac{2}{\sqrt{M}n^2}\sqrt{E\Big[\max\limits_{1\leq i\leq m}
			\Big|\sigma(y_{i}^*,\sum\limits_{k=1}^nV_k)-\sigma(y_{i}^*,\sum\limits_{k=1}^nE[V_k])\Big|^2\Big]}
		\sqrt{4Mn^2}+ 4\varepsilon^2 \\
		&&=\frac{1}{\varepsilon^2n^2}\sum\limits_{i=1}^m \sum\limits_{k=1}^n Var(\sigma(y_{i}^*,V_k))
		+\frac{4}{n}\sqrt{\sum\limits_{i=1}^m\sum\limits_{k=1}^n Var(\sigma(y_{i}^*,V_k))}+4\varepsilon^2\\
		&&\leq \frac{1}{\varepsilon^2}\sum\limits_{i=1}^{m}\frac{\sum\limits_{k=1}^n Var(\sigma(y_{i}^*,V_k))}{n^2}
		+4\sqrt{\sum\limits_{i=1}^{m}\frac{\sum\limits_{k=1}^n Var(\sigma(y_{i}^*,V_k))}{n^2}}+4\varepsilon^2.
	\end{eqnarray*}	 
\end{proof}

Having established these two lemmas, we are now able to prove the weak law of large numbers.
\begin{theorem}\label{thm:result2}
	Assume the dimension $dim\frak X<\infty$.  Let $\{V_{n}:n\in \mathbb N\}$ be a sequence of ${\mathcal P}_{kc}(\frak X)$-valued random variables, which are  uncorrelated and compactly uniformly integrable in $L^2$, and for any $x^*\in S^*$
	\begin{equation}\label{eqn:uniform1}
		\frac{1}{n^2}\sum_{k=1}^{n}Var(\sigma(x^*,V_{k}))\longrightarrow0
		\ \ as \ n\rightarrow\infty.
	\end{equation}
	Then
	$$P\Big\{d_{H}\Big(\frac{1}{n}\sum\limits_{k=1}^n V_{k},\frac{1}{n}\sum\limits_{k=1}^n E[V_{k}]\Big)>\varepsilon\Big\}\longrightarrow0.$$
\end{theorem}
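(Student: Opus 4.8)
The plan is to follow the proof of Theorem~\ref{thm:2} as far as it goes and then supply the single new ingredient forced by passing from the two-point set $S^*=\{Id,-Id\}$ to a full unit sphere. Exactly as in \eqref{eqn:equivalent}, I would first combine the support-function form of $d_H$ with additivity of the support function and Lemma~\ref{lem:change} (so that $s(x^*,E[V_k])=E[s(x^*,V_k)]$) to identify $d_H\big(\frac1n\sum_{k=1}^n V_k,\frac1n\sum_{k=1}^n E[V_k]\big)=\frac1n\sup_{x^*\in S^*}|g_n(x^*)|$, where $g_n(x^*):=\sum_{k=1}^n\big(s(x^*,V_k)-E[s(x^*,V_k)]\big)$. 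Thus it suffices to show $P\{\sup_{x^*\in S^*}|g_n(x^*)|>\varepsilon n\}\to0$. In the one-dimensional theorem the supremum ran over two points and was dispatched by the elementary inequality used in \eqref{eqn:limit}; the entire new difficulty is that $S^*$ is now uncountable.

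The point where $\dim\frak X<\infty$ enters decisively is that $S^*$ is then compact, so for every $\delta>0$ it admits a finite $\delta$-net $\{x_1^*,\dots,x_N^*\}$ with $N=N(\delta)$ independent of $n$. Since $|s(x^*,A)-s(y^*,A)|\le\|x^*-y^*\|\,\|A\|_{\bf K}$, the map $x^*\mapsto g_n(x^*)$ is Lipschitz with the random constant $L_n:=\sum_{k=1}^n(\|V_k\|_{\bf K}+E\|V_k\|_{\bf K})$, which yields the splitting $\sup_{x^*}|g_n(x^*)|\le\max_{1\le i\le N}|g_n(x_i^*)|+\delta L_n$. For the leading term I would use that, by Definition~\ref{def:uncorrelated}, for each fixed direction $x_i^*$ the sequence $\{s(x_i^*,V_k)\}_{k}$ is pairwise uncorrelated, so every cross term vanishes and $E[g_n(x_i^*)^2]=\sum_{k=1}^n Var(s(x_i^*,V_k))$, precisely as in the computation preceding \eqref{eqn:limit}. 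A union bound together with Chebyshev then gives $P\{\max_i|g_n(x_i^*)|>\tfrac{\varepsilon n}{2}\}\le\sum_{i=1}^N\frac{4}{\varepsilon^2}\big(\frac{1}{n^2}\sum_{k=1}^n Var(s(x_i^*,V_k))\big)$, which tends to $0$ by \eqref{eqn:uniform1} because the net is finite; this finiteness is exactly what upgrades the pointwise hypothesis into a statement uniform over the net.

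The term I expect to be the main obstacle is the oscillation remainder $\delta L_n$, since it is here that the uncountable supremum is genuinely felt and the Lipschitz modulus $L_n$ grows with $n$. I would control it by a further Markov estimate, $P\{\delta L_n>\tfrac{\varepsilon n}{2}\}\le\frac{2\delta}{\varepsilon}\cdot\frac1n\sum_{k=1}^n 2E\|V_k\|_{\bf K}$; provided $\frac1n\sum_{k=1}^n E\|V_k\|_{\bf K}$ stays bounded in $n$ (which holds under the integrable boundedness already built into $V_k\in L^2(\Omega;{\bf K}_{kc}(\frak X))$ together with a mild uniformity), one first fixes $\delta$ small enough to make this contribution below any prescribed $\eta>0$, and only then lets $n\to\infty$ so that the finite-net term vanishes. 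Combining the two estimates gives $\limsup_n P\{\sup_{x^*}|g_n(x^*)|>\varepsilon n\}\le\eta$ for arbitrary $\eta$, hence the conclusion. I therefore anticipate that the uncorrelation bookkeeping is identical to the one-dimensional case and that the real work lies in making the quantitative, uniform-in-$n$ passage from the finite net to the whole sphere.
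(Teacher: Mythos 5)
Your reduction to a finite $\delta$-net and your treatment of the net term are sound: the Lipschitz bound $|s(x^*,A)-s(y^*,A)|\le\|x^*-y^*\|\,\|A\|_{\bf K}$, the union bound over the $N(\delta)$ net points, and the vanishing of cross terms at each \emph{fixed, deterministic} direction are all correct, and this part genuinely uses only the stated hypotheses. The gap is in the oscillation term. Your estimate $P\{\delta L_n>\tfrac{\varepsilon n}{2}\}\le\frac{4\delta}{\varepsilon}\cdot\frac1n\sum_{k=1}^n E\|V_k\|_{\bf K}$ is useful only if $\frac1n\sum_{k=1}^n E\|V_k\|_{\bf K}$ stays bounded in $n$, and you assert this is ``built into'' $V_k\in L^2(\Omega;{\bf K}_{kc}(\frak X))$ plus ``a mild uniformity.'' It is not: $L^2$-integrable boundedness constrains each $V_k$ separately, and hypothesis \eqref{eqn:uniform1} constrains only variances, which are completely insensitive to adding large deterministic summands to $V_k$. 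Nothing prevents $E\|V_k\|_{\bf K}$ from growing like $4^k$ while every hypothesis of Theorem \ref{thm:result2} holds, in which case your bound on the remainder is vacuous for every fixed $\delta>0$. So, as written, your argument proves a weaker theorem in which boundedness of the average expected norms is an additional standing assumption.

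Moreover, this is not a repairable technicality of your method: some such extra uniformity is genuinely needed, because the theorem as stated fails. Sketch of a counterexample in $\frak X=\mathbb R^2$: choose disjoint arcs $I_k\subset S^*$ of length $2^{-k}$, smooth bumps $h_k\ge 0$ supported in $I_k$ with $\max h_k=k$ and $\|h_k''\|_\infty\le Ck4^k$; then $f_k=h_k+R_k$ with $R_k=Ck4^k$ satisfies $f_k''+f_k\ge0$, hence is the support function of a convex body $C_k$, and let $D_k$ be the ball of radius $R_k$. Put $V_k=C_k$ or $D_k$ with probability $\tfrac12$ each, independently in $k$. These $V_k$ are independent (hence uncorrelated) and bounded; since $s(x^*,V_k)-E[s(x^*,V_k)]=\pm\tfrac12 h_k(x^*)$ and the supports of the $h_k$ are disjoint, at most one term of $\sum_{k\le n}Var(s(x^*,V_k))$ is nonzero for each fixed $x^*$, so \eqref{eqn:uniform1} holds; yet $d_H\big(\frac1n\sum_{k\le n}V_k,\frac1n\sum_{k\le n}E[V_k]\big)=\frac1{2n}\max_{k\le n}k=\frac12$ a.s.\ for all $n$. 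This also clarifies the relation to the paper's own proof, which takes a different route: instead of a net, it uses compactness of $S^*$ to extract in \eqref{eqn:4}--\eqref{eqn:5} a maximizing direction $x_0^*$ and then applies uncorrelation and \eqref{eqn:uniform1} \emph{at} $x_0^*$. But $x_0^*$ depends on $n$ (and on $\omega$), whereas both hypotheses are pointwise statements for fixed $x^*$; in the example above, $\frac1{n^2}\sum_{k\le n}Var(s(x_0^*(n),V_k))$ equals $\tfrac14$ along the maximizing directions. Your net decomposition has the virtue of making the missing uniformity explicit; the condition you wave through as ``mild uniformity'' is exactly what is absent both from the theorem's hypotheses and from the paper's argument, and it cannot be dispensed with.
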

\begin{proof}
	For each $V_{i}$, by \eqref{squareintegral}, we have that $\|V_i\|_{\mathcal {P}}\in L^2(\Omega;\mathbb R)$, and for any natural number $n$,
	\begin{equation}\label{eqn:dominated}
		\begin{split}
			&d_H\Big(\sum\limits_{i=1}^{n}V_i, E[\sum\limits_{i=1}^{n}V_i]\Big)\leq \|\sum\limits_{i=1}^{n}V_i\|_{\mathcal {P}}+E\Big[\|\sum\limits_{i=1}^{n}V_i\|_{\mathcal {P}}\Big]\\
			&\leq \sum\limits_{i=1}^{n}\|V_i\|_{\mathcal {P}}+\sum\limits_{i=1}^{n}E\Big[\|V_i\|_{\mathcal {P}}\Big]\in L^2(\Omega;\mathbb R).
		\end{split}
	\end{equation}
	
	Since $d_H\Big(\sum\limits_{i=1}^{n}V_i, E[\sum\limits_{i=1}^{n}V_i]\Big)\in L^2(\Omega; \mathbb R)$, then for any $\varepsilon>0$, by using Markov inequality and the equality \eqref{metric}, we obtain
	\begin{eqnarray}\label{eqn:3}
		&&P\Big\{d_{H}\Big(\frac{1}{n}\sum\limits_{k=1}^n V_{k},\frac{1}{n}\sum\limits_{k=1}^n
		E[V_{k}]\Big)>\varepsilon\Big\}\nonumber\\
		&&\leq\frac{1}{(\varepsilon n)^{2}}E\Big[d_{H}\Big(\sum\limits_{k=1}^n V_{k},\sum\limits_{k=1}^n E[V_{k}]\Big)\Big]^{2}\nonumber\\
		&&=\frac{1}{(\varepsilon n)^{2}}E\Big[\sup_{x^{*}\in S^{*}}|\sigma(x^{*},\sum\limits_{k=1}^n
		V_{k})-\sigma(x^{*},\sum\limits_{k=1}^n E[V_{k}])|^{2}\Big].
	\end{eqnarray}
	From the above inequality and Lemma \ref{expectsupinequality}, it holds that
	\begin{equation}
		\begin{split}
			&P\Big\{d_{H}\Big(\frac{1}{n}\sum\limits_{k=1}^n V_{k},\frac{1}{n}\sum\limits_{k=1}^n
			E[V_{k}]\Big)>\varepsilon\Big\}\\
			&\leq \frac{1}{\varepsilon^2}\sum\limits_{i=1}^{m}\frac{\sum\limits_{k=1}^n Var(\sigma(y_{i}^*,V_k))}{n^2}
			+4\sqrt{\sum\limits_{i=1}^{m}\frac{\sum\limits_{k=1}^n Var(\sigma(y_{i}^*,V_k))}{n^2}}+4\varepsilon^2. 
		\end{split}
	\end{equation}
	From the condition \eqref{eqn:uniform1}, for any $\epsilon>0$ and sufficient large $n$, we have
	$$\frac{1}{n^2}\sum\limits_{k=1}^n Var(\sigma(y_{i}^*,V_k))< \frac{\varepsilon^3}{m}.$$
	Then
	\begin{eqnarray*}
		&&\frac{1}{\varepsilon^2n^2}E\Big[\sup\limits_{x^*\in
			S^*}\Big|\sigma(x_i^*,\sum\limits_{k=1}^nV_k)-\sigma(x_i^*,\sum\limits_{k=1}^nE[V_k])\Big|^2\Big]\\
		&&\leq\frac{m}{\varepsilon^2}\frac{\varepsilon^3}{m}+4\sqrt{\frac{m\varepsilon^3}{m}}+4\varepsilon^2
		=\varepsilon+ 4\sqrt{\varepsilon^3}+4\varepsilon^2.
	\end{eqnarray*}
	By the arbitrariness of $\varepsilon$,
	the theorem is proved.
	
\end{proof}
\begin{corollary}
	Assume the dimension $dim\frak X<\infty$.  Let $\{V_{n}:n\in \mathbb N\}$ be a sequence of ${\mathcal P}_{kc}(\frak X)$-valued random variables, which are  uncorrelated, compactly uniformly integrable in $L^2$, and for any $x^*\in \frak X^*$, $\sigma(x^*, V_1), \sigma(x^*, V_2),\cdots,  $ are identically distributed. Then for arbitrary $\varepsilon >0$, we have
	$$P\Big\{d_{H}\Big(\frac{1}{n}\sum\limits_{k=1}^n V_{k},\frac{1}{n}\sum\limits_{k=1}^n E[V_{k}]\Big)>\varepsilon\Big\}\longrightarrow0.$$
\end{corollary}
\begin{proof}
	It is a special case of Theorem \ref{thm:result2}.
	In fact, for any $x^*\in S^*$,
	$$\frac{1}{n}\sum\limits_{k=1}^n Var\left(\sigma(x^*,V_k)\right)=Var\left(\sigma(x^*,V_1)\right)\leq Var(\sup_{x^*\in S^*}\sigma(x^*,V_1))=Var(\|V_1\|_{\mathcal {P}})<\infty,$$
	which implies the condition \eqref{eqn:uniform1}, that is
	$$\frac{1}{n^2}\sum\limits_{k=1}^n Var\left(\sigma(x^*,V_k)\right)=\frac{1}{n}Var(\sigma(x^*,V_1))
	\leq \frac{1}{n}Var\left(\|V_1\|_{\mathcal {P}}\right)\rightarrow0 \ \ as\ \ n\rightarrow\infty. $$
\end{proof} 
\section {Strong law of large numbers}
\label{author_sec:4}
In this section, we shall focus on the strong law of large numbers for uncorrelated set-valued random variables when the underlying space is of finite dimension. 

First, let's list here several strong laws of large number for real-valued uncorrelated random variables that will be needed later.

\begin{theorem}\label{thmchung}
	(cf. Theorem 5.1.2 in \cite{Chung}) If $\{X_1, X_2,\cdots,\}$ is a sequence of uncorrelated real-valued random variables and their second moments have a common bound, then
	$$
	\frac{1}{n}\sum_{k=1}^{n}(X_k-E(X_k))\rightarrow  \ a.s.
	$$
\end{theorem}
\begin{theorem}\label{thmTaylor1} 
	(cf. Theorem 3.1.2 in \cite{Taylor78})	If $\{X_1, X_2,\cdots,\}$ is a sequence of uncorrelated real-valued random variables such that for all $n\in \mathbb{N}$, $Var(X_n)\leq M$ where $M$ is a constant, then
	$$
	\frac{1}{n}\sum_{k=1}^{n}(X_k-E(X_k))\rightarrow  \ a.s.
	$$
\end{theorem}

\begin{theorem}\label{thmTaylor2} 
	(cf. Theorem 3.1.3 in \cite{Taylor78})	If $\{X_1, X_2,\cdots,\}$ is a sequence of uncorrelated real-valued random variables such that 
	$$
	\sum_{n=1}^{\infty}\frac{Var(X_n)}{n^2}log^{2}n<\infty,
	$$
	then
	$$
	\frac{1}{n}\sum_{k=1}^{n}(X_k-E(X_k))\rightarrow  \ a.s.
	$$
\end{theorem}
The support function is a bridge linking set-valued variables to real-valued variables. Here, we further leverage this bridge to prove the strong law of large numbers for set-valued random variables. Let $\dim{\frak{X}}=d (>1)$, $\{V_{n},n=1,2,\cdots\}\subset \mathcal{M}(\Omega;
\mathcal{P}_{kc}(\frak{X}))$ and $x^*\in \frak{X}^*(=\frak{X})$. F We now examine the relationship between strong convergence (convergence with respect to the Hausdorff metric) and weak convergence ( convergence of the sequence of real-valued variables $\{\sigma(x^*, V_{n}), n=1,2,\cdots\}$).

\begin{lemma}\label{equivalentconvergence}
	Let $\{V, V_{n},n=1,2\cdots,\}\subset \mathcal{M}(\Omega, \mathcal{P}_{kc}(\frak{X}))$ and $x^*\in \frak{X}^*$. Then 
	\begin{equation}\label{eqstrong}
		\lim_{n\rightarrow\infty}d_{H}(V_{n}, V)=0 \ a.s.
	\end{equation}
	if and only if for any $x^*\in {\frak X}^*$,
	\begin{equation}\label{eqweak}
		\lim_{n\rightarrow\infty}|\sigma(x^*,V_{n})-\sigma(x^*,V)|=0 \ \ a.s.
	\end{equation}
\end{lemma}
\begin{proof}
	By \eqref{metric}, we know 
	\begin{equation}\label{eqmetric}
		d_{H}(V_{n}, V)=\sup\limits_{x^*\in S^*}|\sigma(x^*,V_{n})-\sigma({x^*,V})|.
	\end{equation}
	From the above equality, 
	\eqref{eqstrong}$\Rightarrow$ \eqref{eqweak} is obvious.
	It only remains to prove that  \eqref{eqweak}$\Rightarrow$ \eqref{eqstrong}.
	
	Let $\{e_1, e_2,\cdots, e_d\}$ be an orthonormal basis of $\frak X^*$. Assume that Eq.\eqref{eqweak} holds. Then for each $e_i$, $i=1,\cdots,d$,
	\begin{equation}\label{eqlimit}
		\lim_{n\rightarrow \infty}\sigma(e_i, V_n)=\sigma(e_i, V) \ a.s \ and
		\lim_{n\rightarrow \infty}\sigma(-e_i, V_n)=\sigma(-e_i, V) \ a.s.
	\end{equation}
	For any $x^*\in S^*$, there exists a set of constants $a_1,\cdots,a_d$ with $|a_i|\leq 1  (i=1,\cdots,d)$ such  that $x^*=\sum_{i=1}^{d}a_{i}e_{i}$.
	\begin{equation}\label{eqnbounded}
		\begin{split}
			\|V_n\|_{\mathcal P}&=\sup_{x^*\in S^*}\sigma (x^*,V_n)=\sup_{a_1,\cdots,a_d}\sigma (\sum_{i=1}^{d}a_{i}e_{i},V_n)\\
			&\leq \sup_{a_1,\cdots,a_d} \sum_{i=1}^{d}\sigma (a_{i}e_{i},V_n)\leq \sup_{a_1,\cdots,a_d}\sum_{i=1}^{d}|a_{i}|\left[\sigma (e_{i},V_n)+\sigma (-e_{i},V_n)\right]\\
			&\leq \sum_{i=1}^{d}\left[\sigma (e_{i},V_n)+\sigma (-e_{i},V_n)\right] \ (\ since \ each\ |a_i|\leq 1\ ).
		\end{split}
	\end{equation}
	A convergent sequence must be bounded.   Owing to the finite cardinality of the basis,  Eq.\eqref{eqlimit} and Eq. \eqref{eqnbounded},  there exists a $\mu$-null set denoted by ${\mathcal N}_{1}$ and $0<M(\omega)<\infty$ such that for $\omega\in\Omega\setminus {\mathcal N}_{1}$, $x^*\in S^*$ and all $n$, it holds that
	$$
	\sigma(x^*,V_n)\leq M(\omega).
	$$
	Therefore, we have
	\begin{equation}\label{eqbounded}
		\sup_{n\geq 1}\|V_n\|_{\mathcal P}=\sup_{n\geq 1}\sup_{x^*\in S^*}\sigma (x^*,V_n)\leq M(\omega),
	\end{equation}
	which means the sequence $\{\|V_n\|_{\mathcal P}, n=1,2,\cdots\}$ is uniformly bounded $\mu$-a.s.
	Take any $x^*, y^*\in S^*$, for any $\omega\in \Omega\setminus {\mathcal N}_{1}$, by Proposition \ref{prop} and Eq.\eqref{eqbounded}, we obtain the equi-continuity of real-valued functions $\{\sigma(\cdot, V_n),n=1,\cdots\}$ in $S^*$ since
	$$
	|\sigma(x^*,V_n)-\sigma(y^*,V_n)|\leq \|x^*-y^*\|_{\frak X^*}\|V_n\|_{\mathcal P}|\leq \|x^*-y^*\|_{\frak X^*} M(\omega).
	$$
	Applying the Ascoli-Arzel\`a  Theorem (Ref. Page 85,\cite{Yoshida}) on finite dimensional space and random case, then \eqref{eqstrong} holds.
	
	In fact, since the compactness of $S^*$, for any $\varepsilon>0$, there exists a finite cover $\{N(x^*_{1},\varepsilon),\cdots, N(x^*_{k},\varepsilon)\}\supset S^*$, where  each $x^*_{i}\in S^*$.   For any $x^*\in S^*$, there is some $x^*_{i}$, such that $\|x^*-x^*_{i}\|_{\frak X^*}\leq \varepsilon$. Then
	\begin{equation*}
		\begin{split}
			&|\sigma(x^*,V_n)-\sigma(x^*,V)|\\
			&\leq|\sigma(x^*,V_n)-\sigma(x^*_i,V_n)|+|\sigma(x^*_{i},V_n)-\sigma(x^*_i,V)|+|\sigma(x^*_i,V)-\sigma(x^*,V)|\\
			&\leq \|x^*-x^*_i\|_{\frak X^*}\|V_{n}\|_{\mathcal P}+\|x^*-x^*_i\|_{\frak X^*}\|V\|_{\mathcal P}+|\sigma(x^*_{i},V_n)-\sigma(x^*_i,V)|\\
			&\leq \varepsilon(|V_n|_{\mathcal P}+\|V\|_{\mathcal P})+|\sigma(x^*_{i},V_n)-\sigma(x^*_i,V)|.
		\end{split}
	\end{equation*}
	For the third term of the above formula, there exists a natural number $N_i$ and a $\mu$-null set ${\mathcal N}^i_2$, such that for all $n>N_i$ and $\omega\in \Omega\setminus {\mathcal N}^i_2$, 
	$$ |\sigma(x^*_{i},V_n)-\sigma(x^*_i,V)|<\varepsilon.
	$$
	Since
	$$
	\lim_{n\rightarrow \infty}|\sigma(x^*_{i},V_n)-\sigma(x^*_i,V)|=0 \ a.s. 
	$$
	Take 
	${\mathcal N}={\mathcal N}_1\cup(\cup_{i=1}^{k} {\mathcal N}_2^i)$, 
	$N=\max\{N_1,\cdots,N_k\}$. Then for  $n>N$ and any $\omega\in \Omega\setminus \mathcal N$, we have 
	$$
	\sup_{x*\in S^*}|\sigma(x^*,V_n)-\sigma(x^*,V)|\leq \varepsilon(M(\omega)+\|V\|_{\mathcal P})+\varepsilon,
	$$
	due to the arbitrariness of $\varepsilon$,  \eqref{eqstrong} holds.
	
\end{proof}
\begin{theorem} \label{thm:strong0}
	Assume $dim\frak X<\infty$ and let $\{V_{n}, n\in\mathbb N\}$ be a sequence of uncorrelated  and identically distributed $\mathcal P_{kc}(\frak X)$-valued random variables with $E[\|V_1\|_{\mathcal P}]<\infty, Var[\|V_1\|_{\mathcal P}]<\infty$.  Then
	$$d_{H}\Big(\frac{1}{n}\sum\limits_{k=1}^n V_{k},E[V_1]\Big)\longrightarrow 0\ as \ n\rightarrow \infty\ a.s.
	$$
	
\end{theorem}
\begin{proof}
	Take $x^*\in S^*$ arbitrarily, then the sequence of real-valued random variables  $\{\sigma(x^*, V_n),n=1,2,\cdots\}$ is uncorrelated and identically distributed with the same second moment. Therefore, as the special case of Theorem \ref{thmchung}, $\{\sigma(x^*, V_n),n=1,2,\cdots\}$ satisfies the strong law of large numbers. That is 
	$$|\frac{1}{n}\sum\limits_{k=1}^n \sigma(x^*,V_{k})-\sigma(x^*,E[V_1])|\longrightarrow 0\ as \ n\rightarrow \infty\ a.s.$$
	Notice that $\{E[V_1], \frac{1}{n}\sum\limits_{i=1}^{n}V_i, n=1,2,\cdots\}\subset {\mathcal P}_{kc}(\frak X)$ since $\{E[V_1],V_1,V_2,\cdots\}\subset {\mathcal P}_{kc}(\frak X)$. In Lemma \ref{equivalentconvergence}, replace $V_n$ with $\frac{1}{n}\sum\limits_{i=1}^{n}V_i$ and replace $V$ with $E[V_1]$,  the desired result is obtained immediately.
\end{proof}

If we remove the condition of identical distribution from Theorem \ref{thm:strong0}, we need to add other conditions. For example, we have the following two theorems.  

\begin{theorem} \label{thm:strong1}
	Assume $dim\frak X<\infty$ and  $\{V_{n}, n\in\mathbb N\}$ be a sequence of uncorrelated and compactly uniformly integrable (in $L^1$) $\mathcal P_{kc}(\frak X)$-valued random variables. For each n, $V_{n}\in  L^2(\Omega; {\mathcal {P}}_{kc}(\frak X))$ and   $Var(\sigma(x^*,V_{n}))\leq M$, where $M$  is a positive constant that does not depend on $x^*$ or $n$. Then
	$$d_{H}\Big(\frac{1}{n}\sum\limits_{k=1}^n V_{k},\frac{1}{n}\sum\limits_{k=1}^n E[V_k]\Big)\longrightarrow 0\ as \ n\rightarrow \infty\ a.s.
	$$
\end{theorem}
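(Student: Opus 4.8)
The plan is to transport the set-valued statement to a real-valued strong law through the support function, and then run the classical subsequence (Rajchman) argument for pairwise uncorrelated sequences with uniformly bounded variances. Write $Y_k(x^*)=s(x^*,V_k)-E[s(x^*,V_k)]$ and $T_n(x^*)=\sum_{k=1}^{n}Y_k(x^*)$. Using additivity of the support function, Lemma \ref{lem:change} (so that $s(x^*,E[V_k])=E[s(x^*,V_k)]$), and the equivalent expression of the Hausdorff metric, the quantity to control is
$$ W_n:=d_{H}\Big(\sum_{k=1}^{n}V_k,\ \sum_{k=1}^{n}E[V_k]\Big)=\sup_{x^*\in S^*}|T_n(x^*)|, $$
and the goal becomes $W_n/n\to 0$ a.s. By Definition \ref{def:uncorrelated}, for each fixed $x^*$ the real sequence $\{s(x^*,V_k)\}_k$ is pairwise uncorrelated with $Var(s(x^*,V_k))\le M$.

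First I would establish a second-moment bound $E[W_N^2]\le MN$, uniformly in $N$. This is exactly the estimate already carried out in the proof of Theorem \ref{thm:result2}: since $dim\,\frak X<\infty$, the unit sphere $S^*$ is compact and $x^*\mapsto s(x^*,A)$ is continuous, so the supremum defining $W_N$ is attained at some measurable maximizer $x_0^*=x_0^*(N,\omega)$, and the uniform variance bound together with the vanishing of the uncorrelated cross terms yields $E[W_N^2]\le \sum_{k=1}^{N}Var(s(x_0^*,V_k))\le MN$. Applying the same estimate to a block of consecutive indices gives, for the tail sums $R_{m,n}:=d_H(\sum_{k=m^2+1}^{n}V_k,\sum_{k=m^2+1}^{n}E[V_k])$, the bound $E[R_{m,n}^2]\le M(n-m^2)$.

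Next I would run the argument along the subsequence $n=m^2$. From $E[W_{m^2}^2]\le Mm^2$ and the Markov inequality, $P(W_{m^2}>\varepsilon m^2)\le M/(\varepsilon^2 m^2)$ is summable, so Borel--Cantelli gives $W_{m^2}/m^2\to 0$ a.s. To fill the gaps, set $D_m:=\max_{m^2\le n<(m+1)^2}R_{m,n}$. Since the block length is at most $2m$, the crude bound $D_m^2\le\sum_{n=m^2}^{(m+1)^2-1}R_{m,n}^2$ together with $E[R_{m,n}^2]\le 2Mm$ gives $E[D_m^2]\le 6Mm^2$, so $P(D_m>\varepsilon m^2)\le 6M/(\varepsilon^2 m^2)$ is again summable and $D_m/m^2\to 0$ a.s. Finally, the subadditivity of $d_H$ under Minkowski sums, $d_H(A+C,B+D)\le d_H(A,B)+d_H(C,D)$, gives $W_n\le W_{m^2}+D_m$ for $m^2\le n<(m+1)^2$, whence
$$ \frac{W_n}{n}\le\frac{W_{m^2}+D_m}{m^2}\longrightarrow 0\quad\text{a.s.} $$
as $n\to\infty$ (equivalently $m\to\infty$), which is the assertion.

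The hard part will be the uniform (in $x^*$) control of the supremum, i.e. the bound $E[W_N^2]\le MN$, because the direction $x_0^*$ realizing the Hausdorff distance depends on both $N$ and $\omega$, whereas the uncorrelation hypothesis is available only direction-by-direction for a \emph{fixed} $x^*$. The finite dimensionality of $\frak X$ enters precisely here, guaranteeing compactness of $S^*$ and the existence of a measurable maximizer, while the hypothesis that $Var(s(x^*,V_n))\le M$ holds \emph{uniformly} over $x^*\in S^*$ is what makes the resulting estimate independent of the random maximizer; this is the step requiring the most care, and it is the same mechanism already exploited in Theorem \ref{thm:result2}. Everything else is the standard subsequence scheme transported through the isometric embedding $j_0$ of $({\bf K}_{kc}(\frak X),d_H)$ into $C(S^*)$.
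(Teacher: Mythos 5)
Your proposal is correct (to the same degree of rigor as the paper itself) and takes essentially the same approach as the paper's own proof: the second-moment bound $E[W_N^2]\le MN$ obtained from compactness of $S^*$, the measurable maximizer, and the uniform variance bound exactly as in Theorem \ref{thm:result2}; the subsequence $n=m^2$ handled by Markov's inequality and Borel--Cantelli; and control of the in-between indices by a block maximum whose second moment is bounded crudely by the sum of the block's second moments. The only cosmetic difference is in the gap-filling step --- you invoke $d_H(A+C,B+D)\le d_H(A,B)+d_H(C,D)$ to get $W_n\le W_{m^2}+R_{m,n}$, whereas the paper bounds $|S_k-S_{n^2}|$ by the very same block quantity via the reverse triangle inequality for suprema --- and note that both arguments share the same delicate point: the uncorrelation hypothesis is applied at the random, $\omega$-dependent maximizer $x_0^*$, although it is only assumed for fixed deterministic $x^*$, so your proof is neither worse nor better off than the paper's on this score.
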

\begin{proof}
	Take $x^*\in S^*$, set $$
	S_{n}(x^*)=\sigma(x^*,\sum\limits_{i=1}^{n}V_i)-\sum\limits_{i=1}^{n}\sigma(x^*,E[V_i]).
	$$
	It remains to prove $\sup_{x^*\in S^*}\frac{|S_{n}(x^*)|}{n}\rightarrow 0$ as $n\rightarrow \infty$ a.s.
	
	Firstly, for any $x^*\in S^*$, the sequence of real-valued variables $\{\sigma(x^*,V_n),n=1,2,\cdots\}$ satisfies the conditions of Theorem \ref{thmTaylor1}. Then $\frac{|S_{n}(x^*)|}{n}\rightarrow 0$ as $n\rightarrow \infty$ a.s.
	
	Secondly, by Proposition \ref{prop}, 
	$$
	S_n(x^*)=\sum\limits_{i=1}^{n}\sigma(x^*, V_i)-\sum\limits_{i=1}^{n}\sigma(x^*, E[V_i])=\sum\limits_{i=1}^{n}\left(\sigma(x^*, V_i)-\sigma(x^*,E[V_{i}])\right).
	$$ 
	
	Since the sequence $\{V_n,n=1,2,\cdots\}$ is compactly uniformly integrable, for any $\varepsilon>0$, there exists a compact subset $\mathcal K_{\varepsilon}\subset \mathcal P(\frak X)$, such that   $$\int_{\{V_n\notin \mathcal K_{\varepsilon}\}}\|V_n\|_{\mathcal P}d\mu<\varepsilon.$$
	Since
	$$E[\|V_n\|_{\mathcal P}]=\int_{\{V_n\notin \mathcal K_{\varepsilon}\}}\|V_n\|_{\mathcal P}d\mu+\int_{\{V_n\in \mathcal K_{\varepsilon}\}}\|V_n\|_{\mathcal P}d\mu,$$ 
	then the sequence of expectations $\{E[\|V_n\|_{\mathcal P}],n=1,2,\cdots\}$ are uniformly bounded. In addition, each $E[V_n]$ is compact and convex.  By the proof of Lemma \ref{equivalentconvergence},  both the sequence of functions  $\{\sigma(\cdot, V_n),n=1,2,\cdots\}$ and the sequence of  $\{\sigma(\cdot, E[V_n]),n=1,2,\cdots\}$ are equi-continuous with respect to $x^*$ in $S^*$. From the property of equi-continuous functions, we obtain that the sequence $\{\frac{1}{n}S_{n}(\cdot),n=1,2,\cdots\}$ is also equi-continuous with respect to $x^*$ in $S^*$.
	Replace $\sigma (x^*,V_n)$ with $\sigma(x^*,\frac{1}{n}S_{n})$ and  $\sigma(x^*,V)$  with $\sigma(x^*,\{0\})$ in the proof of Lemma \ref{equivalentconvergence}, we obtain
	$\sup_{x^*\in S^*}\frac{|S_{n}(x^*)|}{n}\rightarrow 0$ as $n\rightarrow \infty$ a.s. (I.e. On the compact set $S^*$, pointwise convergence and uniform convergence of a sequence of equi-continuous functions are equivalent.)
	
\end{proof}
Furthermore, the following stronger result can be obtained.
\begin{theorem} Assume $dim\frak X<\infty$ and  $\{V_{n}, n\in\mathbb N\}$ be a sequence of uncorrelated and compactly uniformly integrable (in $L^1$) $\mathcal P_{kc}(\frak X)$-valued random variables.   And for any $x^*\in S^*$,
	\begin{equation}\label{eqn:6}
		\sum\limits_{n=1}^\infty\frac{Var(\sigma(x^*,V_n))}{n^2}\log ^2n<\infty.
	\end{equation}
	Then
	$$d_{H}\Big(\frac{1}{n}\sum\limits_{k=1}^n V_{k},\frac{1}{n}\sum\limits_{k=1}^n E[V_k]\Big)\longrightarrow 0.\ \ a.s.
	$$
\end{theorem}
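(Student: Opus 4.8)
The plan is to push the statement through the support-function embedding to a real-valued almost sure law, and then to run a Borel--Cantelli argument along dyadic blocks $2^n\le k<2^{n+1}$, where the oscillation inside each block is controlled by the Rademacher--Menshov maximal inequality; the $\log^2 n$ weight in \eqref{eqn:6} is dictated precisely by that inequality.

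Setting $S_n=d_{H}(\sum_{k=1}^n V_k,\sum_{k=1}^n E[V_k])$, I would first rewrite it as in the proof of Theorem \ref{thm:strong1}. Putting $Y_k(x^*)=s(x^*,V_k)-E[s(x^*,V_k)]$ and using Lemma \ref{lem:change} to identify $s(x^*,\sum_{k=1}^n E[V_k])$ with $\sum_{k=1}^n E[s(x^*,V_k)]$, the support-function form of $d_H$ gives $S_n=\sup_{x^*\in S^*}|\sum_{k=1}^n Y_k(x^*)|$. Since $\dim\frak X<\infty$, the sphere $S^*$ is compact and each $x^*\mapsto s(x^*,A)$ is continuous, so the supremum is attained at some $x_0^*$ (depending on $n$ and $\omega$). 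For every fixed $x^*$, Definition \ref{def:uncorrelated} makes $\{Y_k(x^*)\}_k$ a centered, pairwise uncorrelated, hence orthogonal, sequence in $L^2(\Omega)$ with $E[Y_k(x^*)^2]=Var(s(x^*,V_k))$. It then suffices to show $S_n/n\to 0$ a.s.

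As in Theorem \ref{thm:strong1} I would split the argument into a subsequence step and a block-oscillation step, but now along the dyadic scale rather than $\{n^2\}$; the dyadic choice is essential, since the interchange of summation below only reproduces the series in \eqref{eqn:6} for blocks of geometric growth. In Step 1, expanding $E[S_{2^n}^2]$ at the maximizer $x_0^*$ and killing cross terms by uncorrelation gives $E[S_{2^n}^2]\le\sum_{k=1}^{2^n}Var(s(x_0^*,V_k))$; then $\sum_n E[S_{2^n}^2]/4^n$, after interchanging the order of summation, is comparable to $\sum_k Var(s(x^*,V_k))/k^2$, which is finite (a fortiori under \eqref{eqn:6}), so Borel--Cantelli yields $S_{2^n}/2^n\to 0$ a.s. In Step 2, the reverse triangle inequality bounds $\max_{2^n\le k<2^{n+1}}|S_k-S_{2^n}|$ by the supremum over $x^*$ of the maximal block increment, and here I would invoke the Rademacher--Menshov inequality $E[\max_{1\le j\le m}(\sum_{i=1}^j\xi_i)^2]\le(\log_2 2m)^2\sum_{i=1}^m E[\xi_i^2]$ for the orthogonal block $\{Y_i(x_0^*)\}$. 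This produces the factor $(n+1)^2$, and the resulting Borel--Cantelli series collapses, after interchange, to $\sum_i Var(s(x^*,V_i))(\log i)^2/i^2$, finite exactly by \eqref{eqn:6}. Combining the two steps as in Theorem \ref{thm:strong1} through $S_k/k\le(S_{2^n}+\max_{2^n\le j<2^{n+1}}|S_j-S_{2^n}|)/2^n$ gives $S_k/k\to 0$ a.s.

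The main obstacle is the supremum over the continuum of directions $x^*\in S^*$: the Rademacher--Menshov inequality governs a single orthogonal sequence, whereas $S_n$ is a supremum of partial sums over all $x^*$ simultaneously, and the maximizer $x_0^*$ is itself random. The delicate point is to justify applying an $L^2$-orthogonality bound at this data-dependent direction; as in Theorem \ref{thm:strong1} this is done by selecting the block-dependent $x_0^*$ and using uncorrelation to annihilate the cross-covariances, and a fully rigorous version would additionally control the oscillation of the partial-sum field in $x^*$, which is available because $x^*\mapsto s(x^*,V_k)$ is Lipschitz with constant $\|V_k\|_{\bf K}$ on the compact $S^*$. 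Once that step is granted, the remaining work---the geometric interchange of sums and the two Borel--Cantelli applications---is routine.
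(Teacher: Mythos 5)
Your proposal is correct in its real-variable mechanics and takes a genuinely different route from the paper. The paper recycles the structure of Theorem \ref{thm:strong1}: blocks along the squares $\{n^2\}$, a sharpened threshold $n^{2}\varepsilon/\log n$ in the Markov--Borel--Cantelli step, and the same crude control of the in-block maximum (bounding it by the sum of second moments of the increments over the block). You instead use dyadic blocks and the Rademacher--Menshov maximal inequality, i.e.\ the classical orthogonal-series argument. This difference is not cosmetic: a square block has length $\sim 2\sqrt{k}$, so after interchanging the order of summation the paper's Step 1 series is comparable to $\sum_k Var(s(x^*,V_k))\log^2 k/k^{3/2}$, not to $\sum_k Var(s(x^*,V_k))\log^2 k/k^{2}$. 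Concretely, the paper's passage from $\sum_{n}\frac{\log^2 n}{n^4}\sum_{k\le n^2}Var(s(x^*_0,V_k))$ to $\sum_n\frac{\log^2 n}{n^2}\sum_{k\le n^2}\frac{Var(s(x^*_0,V_k))\log^2 k}{k^2}$ uses the per-term inequality $\frac{1}{n^2}\le\frac{\log^2 k}{k^2}$, i.e.\ $k\le n\log k$, which fails for $k$ near $n^2$; and for instance $Var(s(x^*,V_k))=k/\log^4 k$ satisfies \eqref{eqn:6} yet makes the paper's Step 1 series diverge. So your dyadic/Rademacher--Menshov scheme is the one that actually matches hypothesis \eqref{eqn:6}, while the paper's square-block scheme, as written, needs a strictly stronger summability condition.

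One caveat, which you flag yourself and which your argument shares with the paper's proofs of Theorems \ref{thm:result2}, \ref{thm:strong1} and of this statement: the maximizing direction $x_0^*$ is random (it depends on $\omega$, not only on the block), and Definition \ref{def:uncorrelated} only annihilates covariances $Cov\big(s(x^*,V_k),s(x^*,V_l)\big)$ at a common \emph{fixed} $x^*$; it says nothing about $Cov\big(s(x^*,V_k),s(y^*,V_l)\big)$ for $x^*\neq y^*$. Hence one may not expand $E\big[(\sum_i Y_i(x_0^*))^2\big]$ by orthogonality at the random maximizer. Your suggested repair --- a finite net of directions on the compact $S^*$ together with the Lipschitz bound $|Y_k(x^*)-Y_k(y^*)|\le\big(\|V_k\|_{\bf K}+E[\|V_k\|_{\bf K}]\big)\|x^*-y^*\|$ --- is the right idea, but note that it additionally requires an a.s.\ bound on $\frac{1}{n}\sum_{k\le n}\big(\|V_k\|_{\bf K}+E[\|V_k\|_{\bf K}]\big)$, which does not follow from \eqref{eqn:6} alone; this uniform-in-direction control is the remaining point to settle before either your proof or the paper's can be called fully rigorous.
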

\begin{proof} The proof is similar to that of Theorem \ref{thm:strong1}.
	
	Take $x^*\in S^*$, set $$
	S_{n}(x^*)=\sigma(x^*,\sum\limits_{i=1}^{n}V_i)-\sum\limits_{i=1}^{n}\sigma(x^*,E[V_i]).
	$$
	
	For each $x^*\in S^*$, by Theorem \ref{thmTaylor2}, the sequence of real-valued random variables $\{\sigma(x^*, V_n), n=1,2,\cdots\}$ obeys the strong law of large numbers. That is
	$$\frac{|S_{n}(x^*)|}{n}\rightarrow 0\  as \ \ n\rightarrow \infty\  a.s. $$
	
	The same as in Theorem \ref{thm:strong1},  the sequence $\{\frac{1}{n}S_{n}(\cdot),n=1,2,\cdots\}$ is  equi-continuous with respect to $x^*$ in $S^*$. Then
	$$\sup\limits_{x^*\in S^*}\frac{|S_{n}(x^*)|}{n}\rightarrow 0\  as \ \ n\rightarrow \infty\  a.s. 
	$$
	
\end{proof}

\begin{remark}
	
	If the probability space $(\Omega, \mathcal F, \mu)$ is non-atomic, whether $V$ is a convex set or not, its Aumman integral $E[V]$ is a convex set.  By using the Shapley and Folkman's inequality (see for example page 97 in \cite{LiOgV} or  Lemma 3.13 in \cite{Hu}), i.e.
	for $\{V_n,n\geq 1\}\subset {\mathcal P}_{k}(\frak X)$ and some $V\in \mathcal {P}_{kc}(\frak X)$, if 
	$$
	d_{H}(\frac{1}{n}\sum_{m=1}^{n}\overline{conv}V_{m}, V)\rightarrow 0 \ as\  n\rightarrow\infty,
	$$
	then
	$$
	d_{H}(\frac{1}{n}\sum_{m=1}^{n}V_{m}, V)\rightarrow 0 \ as\  n\rightarrow\infty,
	$$
	where $\overline{conv}V_{m}$ is the closed convex hull of $V_m$.
	It is easy to see that the above laws of large numbers also hold after removing the convexity condition.
	
\end{remark}
\section{Concluding remark}
\label{author_sec:5}
In a hyperspace, there are some known results about WLLNs and SLLNs for independent identically distributed or independent
(not necessarily identically distributed) set-valued random variables. The innovation of this paper is that the independence is replaced with a weaker condition: uncorrelation.  Since the hyperspace ${\mathcal {P}}(\frak X)$ is not linear,
there is no nice subtraction and multiplication between two sets, no suitable set-valued covariance for set-valued random variables. By means of the support function, a powerful tool that connects set-valued variables with real-valued variables. Here we firstly give the definition of uncorrelated set-valued random variables through support function and then discuss its properties. The assumption of uncorrelation is more in line with the actual situation than independence. By virtue of the nice properties of support functions in finite-dimensional spaces  we obtain the WLLN and SLLN  for uncorrelated set-valued  random variables in the sense of Hausdorff metric $d_H$.
These results are also the extension of  law of large numbers for single-valued uncorrelated random variables. Our results are expected to be used in set-valued especially interval-valued statistical modeling and analysis.

Here we considered the finite dimensional case since the unit sphere of finite dimension space is compact, which is a key condition to prove the equivalence of pointwise convergence and uniformly convergence of a sequence of functions. For infinitely dimensional case, we need stronger condition, or consider it in a weak topology. 

\section*{Acknowledgment}
This work is partially supported
by Beijing Municipal Natural Science Foundation No.1192015 (Jinping Zhang)

\begin{thebibliography}{99}


	\bibitem{AV}
Z. Artstein and  R. A. Vitale, {\it A strong law of large
	numbers for random compact sets},  Ann. Probab., {\bf 3} (1975),
879-882.

\bibitem{AV1985}
Z. Artstein, Z. and  J. C. Hansen, {\it Convexification in limit laws of random sets in Banach spaces},  Ann. Probab. {\bf 3 (1)} (1985),
307-309.

\bibitem{Au65}
R. Aumann, {\it Integrals of set-valued functions}, J. Math. Anal. Appl., {\bf 12} (1965), 1-12.


\bibitem{Boz}
A. Bozorgnia, R. F. Patterson and R. L. Taylor, {\it Limit theorems for negatively dependent random variables},
University of Georgia Technical Report, 1993.

\bibitem{CasVa}
C. Castaing and M. Valadier, {\em Convex analysis and
	measurable multifunctions},  Lect. Notes in Math., {\bf 580},
Springer--Verlag, Berlin, New York, 1977.


\bibitem{Chung}
K. L. Chung, {\em A Course in Probability Theory}, Academic Press, Third Edition, 2001.


\bibitem{Frank}
H. Frankowska and J. P. Aubin,{\em Set-Valued Analysis}, Birkhauser, 1990.

\bibitem{GHZ}
E. Gin\'e, G. Hahn and  J. Zinn, {\it Limit theorems for random sets: an
	application of probability in Banach space results}, Lect. Notes in Math.,
{\bf 990} (1983), 112-135.

\bibitem{Guan04}
L. Guan and S. Li, {\it Laws of large numbers for weighted sums of fuzzy set-valued random variables}, Int. J. of Uncertainty, Fuzziness and Knowledge-based System, {\bf 12} (2004), 811-825.

\bibitem{Guan07}
L. Guan, S. Li, S. and H. Inoue, {\it Strong laws of large numbers for weighted sums of set-valued random variables in Rademacher type p Banach space}, Scientiae Mathematicae Japonicae, {\bf 67} (2007), 377-392.

\bibitem{Guan08}
L. Guan, S. Li and Y. Ogura, {\it A strong law of large numbers of fuzzy set-valued random variables with slowly varying weights}, International Journal of Automation and Control {\bf 2} (2008), 365-375.

\bibitem{He}
C. Hess, {\it Th$\acute{e}$eor$\grave{e}$me ergodique et loi forte des grands
	nombers pour des ensembles al$\acute{e}$atoires},
C.R. Acad. Sci. Paris S$\acute{e}$r. A, {\bf 288} (1979), 519-522.

\bibitem{HiUm}
F. Hiai and H. Umegaki, {\it Integrals, conditional expectations
	and martingales of multivalued functions}, J. Multivar. Anal.,
{\bf 7} (1977), 149-182.

\bibitem{Hiai}
F. Hiai, {\it Convergence of conditional
	expectations and strong laws of large numbers for multivalued random
	variables}, Trans. A.M.S., {\bf 291} (1985), 613-627.

\bibitem{Hu}
S. Hu and S. Papageorgiou, {\em Handbook of Multivalued Analysis}, Volume I: Theory, Kluwer Academic Publishers, 1997.



\bibitem{Jan}
M. Janisch,  {\it Kolmogorov's strong law of large numbers holds for pairwise uncorrelated random variables} Theory of probability and its applications {\bf 66 }(2) (2021), 263-275.

\bibitem{Ko}
M. H. Ko, {\it The strong law of large numbers for
	linear random fields generated by negatively associated random variables on $Z^{d}$}, Rocky Mountain Journal of Mathematics,
{\bf 43} (2013), 1215-1225.


\bibitem{Kuc}
A. Kuczmaszewska, {\it The strong law of large numbers for
	dependent random variables},  Statistics $\&$ Probability Letters, {\bf 73} (2005), 305-314.



\bibitem{LiOg06}
S. Li and Y. Ogura, {\it Strong laws of large numbers for independent fuzzy set-valued random variables}, Fuzzy Sets and Systems, {\bf 157}(2006), 2569-2578.

\bibitem{LiOgV}
S. Li,Y.  Ogura and V. Kreinovich, {\em Limit Theorems
	and Applications of Set-Valued and Fuzzy Sets-Valued Random
	Variables}, Kluwer Academic Publishers, 2002.

\bibitem{Mol}
I. Molchanov,  {\em Theory of Random Sets}, Springer, 2005.

\bibitem{Mol2014}
I. Molchanov F. and Molinari, {\it Applications of random set theory
	in econometrics}, Annu. Rev. Econ., {\bf 6} (2014): 229-251.

\bibitem{Pat}
R. F. Patterson, A. Bozorgnia and R. L. Taylor, {\it Limit theorems for dependent random variables}, in {\it  Proceedings of the First World Congress on World Congress of Non-linear Analysts}, New Jersey: Walter de Gruyter and Co. Hawthorne, 1992, pp: 1639-1650.

\bibitem{PuRa83}
M. L. Puri and D. A. Ralescu, {\it  Strong law of large numbers for Banach space
	valued random sets},  Ann. Probab., {\bf 11} (1983), 222--224.

\bibitem{TaIna}
R. L. Taylor and H. Inoue, {\it A strong law of large numbers for random
	sets in Banach spaces},  Bull. Instit. Math. Academia Sinica, {\bf 13} (1985),
403-409.


\bibitem{Taylor78}
R. L. Taylor, {\em Lecture Notes in Mathematics}. Springer-Verlag, Berlin Heidelberg New York, 1978.

\bibitem{Taylor01}
R. L. Taylor, R. F. Patterson, W. D. Smith and  A. Bozorgnia, {\it Limit theorems for negatively dependent random variables}, Nonlinear Analysis, {\bf 47} (2001), 1283-1295.

\bibitem{Teran06}
P. Ter$\acute{a}$n and I. Molchanov, {\it  The law of large numbers in a metric
	space with a convex combination operation}, J. Theoret. Probab., {\bf 19}(2006),
875-898.

\bibitem{Teran25}
P. Ter$\acute{a}$n, {\it Laws of large numbers for Sugeno integrals}, Information Sciences, {\bf 701} (2025) 121813.

\bibitem{Yoshida}
K. Yoshida, {\it Functional Analysis}, Sixth Edition, Springer-Verlag, Berlin Heidelberg New York, 1999.
 \end{thebibliography}



\end{document}